\documentclass[12pt]{amsart}

\usepackage{latexsym,amsfonts,amssymb,epsfig,verbatim}
\usepackage{amsmath, latexsym, graphics, textcomp,psfrag}
\usepackage{float}
\usepackage{amsthm}
\usepackage{pinlabel,enumitem,mathrsfs}

\newcommand{\nc}{\newcommand}

\nc{\dmo}{\DeclareMathOperator}
\nc{\nt}{\newtheorem}

\nt{theorem}{Theorem}
\nt{lemma}[theorem]{Lemma}
\nt{prop}[theorem]{Proposition}
\nt{fact}[theorem]{Fact}
\nt{question}[theorem]{Question}
\nt{cor}[theorem]{Corollary}
\nt{problem}[theorem]{Problem}
\nt{conjecture}[theorem]{Conjecture}

\dmo{\colim}{colim}

\nc{\Z}{\mathbb{Z}}
\nc{\R}{\mathbb{R}}
\nc{\N}{\mathbb{N}}
\nc{\Q}{\mathbb{Q}}

\nc{\A}{\mathcal{A}}
\nc{\C}{\mathcal{C}}
\nc{\Cbar}{\overline{\mathcal{C}}}
\nc{\B}{\mathcal{B}}
\nc{\Bbar}{\overline{\mathcal{B}}}
\nc{\M}{\mathcal{M}}
\nc{\F}{\mathcal{F}}
\nc{\PP}{\mathcal{P}}

\nc {\cln}{\,\colon\!} 
\nc {\bdy}{\partial}

\nc{\s}{\mathscr{C}}

\dmo{\Isom}{Isom}
\dmo{\Homeo}{Homeo}
\dmo{\Diff}{Diff}
\dmo{\Mod}{M}
\dmo{\Aut}{Aut}
\dmo{\Lk}{Lk}
\dmo{\PMod}{PMod}
\dmo{\Sp}{Sp}
\dmo{\Push}{Push}
\dmo{\Forget}{Forget}
\dmo{\supp}{supp}
\dmo{\Surger}{Surger}
\dmo{\Drain}{Drain}
\dmo{\Teich}{Teich}
\dmo{\Tor}{T}
\nc{\I}{\Tor}

\nc{\p}[1]{\smallskip\noindent{{\bf #1}}}

\include{diagram}

\begin{document}

\input{epsf.sty}


\title{Generating the Torelli group}

\author{Allen Hatcher}

\author{Dan Margalit}

\keywords{Torelli group, bounding pair maps}

\subjclass[2000]{Primary: 20F36; Secondary: 57M07}

\thanks{The second author gratefully acknowledges support from the National Science Foundation and the Sloan Foundation.}

\begin{abstract}
We give a new proof of the theorem of Birman--Powell that the Torelli subgroup of the mapping class group of a closed orientable surface of genus at least $3$ is generated by simple homeomorphisms known as bounding pair maps.  The key ingredient is a proof that the subcomplex of the curve complex of the surface spanned by curves within a fixed homology class is connected. 
\end{abstract}

\maketitle

\section{Introduction}

The \emph{mapping class group} of a closed connected orientable surface $S$ is $ \Mod(S) = \pi_0(\Diff^+(S)) $, the group of isotopy classes of orientation-preserving diffeomorphisms of $S$. Perhaps the simplest type of isotopically nontrivial diffeomorphism of $S$ is a \emph{Dehn twist} along an embedded closed curve. This is a diffeomorphism supported on an annular neighborhood of the curve, the effect of the diffeomorphism on arcs crossing the annulus being to twist these arcs around the annulus as shown in the following figure:
\begin{center}
\includegraphics[scale=1]{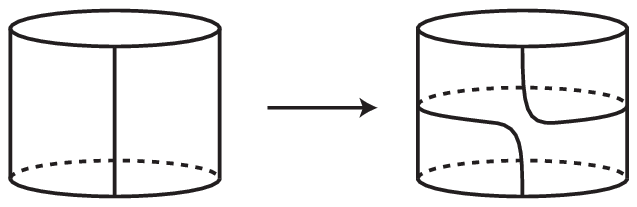}
\end{center}
In the 1920s Dehn proved that $\Mod(S)$ is generated by these twists, although this result was only published a decade later in \cite[\S 10]{dehn}; an English translation can be found in \cite[Paper 8]{stillwell}. 

The group $\Mod(S)$ has a natural action on $H_1(S)=H_1(S;\Z)$, and the kernel of this action is known as the \emph{Torelli group}, for which we use the notation $\I(S)$. 
In this paper we will be interested in the analogue of Dehn's theorem for the Torelli group.  

Certain Dehn twists belong to the Torelli group, namely the twists along curves that separate $S$.  This can be seen by observing that for each separating curve $c$, a basis for $H_1(S)$ can be chosen consisting of curves disjoint from $c$.  Conversely, it is easy to see that if $c$ is nonseparating, a twist along $c$ acts nontrivially on the homology class of a curve that crosses $c$ exactly once.   

The next-simplest element of $\I(S)$ after a separating twist is a \emph{bounding pair map}, which is the composition of a twist along a nonseparating curve $c$ and an inverse twist along another nonseparating curve $d$ disjoint from $c$ but representing the same homology class as $c$, so ${c\cup d}$  separates $S$ into two subsurfaces having ${c\cup d}$ as their common boundary.  

\vspace{-16pt}
\begin{center}
\includegraphics[scale=1]{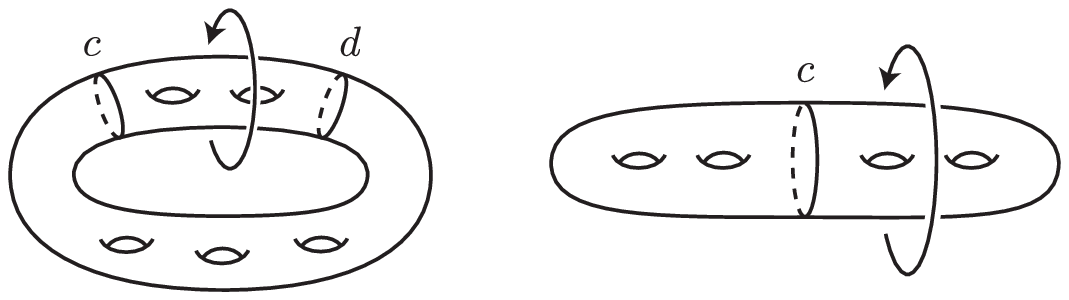}
\end{center}

\noindent
A bounding pair map, like a separating twist, can be realized by a motion of an embedding of $S$ in $\R^3$ in which a subsurface is rotated through $360$ degrees, a subsurface bounded by $c$ in the case of a separating twist and by $c$ and $d$ in the case of a bounding pair map. 
One can see that a bounding pair map acts trivially on homology by noting that a basis for $H_1(S)$ can be chosen to consist of curves disjoint from $c$ and $d$ except for one curve that crosses each of $c$ and $d$ exactly once, and it is easy to see that this curve is taken to a homologous curve.  

\begin{theorem}[Birman--Powell]
\label{main}
The Torelli group  $\I(S)$ is generated by separating twists and bounding pair maps.
\end{theorem}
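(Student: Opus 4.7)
The plan is to induct on the genus $g \geq 3$, using the connectedness result advertised in the abstract (for the subcomplex $\mathcal{N}_h \subset \mathcal{C}(S)$ of simple closed curves representing a primitive homology class $h$) as the sole geometric input. For the induction to close, I would formulate a companion statement for compact surfaces with boundary, where $\I$ is defined as the kernel of the action on $H_1$ of the capped-off surface; this allows one to attack the stabilizer of a nonseparating curve after cutting.

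Given $f \in \I(S)$, fix a nonseparating simple closed curve $c$ with $[c] = h$. Since $f$ acts trivially on homology, $f(c)$ also represents $h$, so $c$ and $f(c)$ lie in the same connected complex $\mathcal{N}_h$. Choose a path of pairwise disjoint homologous curves $c = c_0, c_1, \ldots, c_n = f(c)$ in $\mathcal{N}_h$; each consecutive pair $(c_i, c_{i+1})$ is a bounding pair. The heart of the argument is the \emph{edge-realization} step: for each $i$, exhibit an element $\phi_i$ of the subgroup $\mathcal{G} \leq \Mod(S)$ generated by separating twists and bounding pair maps such that $\phi_i(c_i) = c_{i+1}$. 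Granting this, the composition $\phi = \phi_{n-1} \cdots \phi_0$ satisfies $\phi(c) = f(c)$, so $\phi^{-1} f$ stabilizes the isotopy class of $c$.

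Cutting $S$ along $c$ produces a surface $S_c$ of genus $g-1$ with two boundary components. Via a Birman-type exact sequence, $\phi^{-1} f$ corresponds (modulo the Dehn twist $T_c$, which is not in $\I(S)$) to an element of the Torelli group of $S_c$. By the inductive hypothesis applied in the bounded-surface setting, this element is a product of separating twists and bounding pair maps on $S_c$; these extend to elements of $\mathcal{G}$ on $S$, completing the inductive step. The base case of small genus-with-boundary is handled by a direct argument, and one also has to verify that the analogous statement for surfaces with boundary really does follow from the same scheme, with the connectedness lemma applicable there as well.

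The main obstacle I expect is the edge-realization step. A bounding pair map fixes each curve of its defining pair setwise, so one cannot simply take $\phi_i$ to be the bounding pair map along $\{c_i, c_{i+1}\}$; instead, one needs a diffeomorphism of $S$ that carries $c_i$ to $c_{i+1}$ and acts trivially on $H_1(S)$. A natural candidate is a ``half-twist'' supported in the subsurface cobounded by $c_i \cup c_{i+1}$, possibly corrected by a motion in the complementary subsurface to kill the action on homology. Recognizing this element as a product of bounding pair maps requires an explicit construction, probably via an auxiliary curve making a bounding pair with each of $c_i, c_{i+1}$, or via lantern-type relations; handling all possible genus distributions of the subsurface cobounded by $c_i \cup c_{i+1}$ is where the real case work lies.
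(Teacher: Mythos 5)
Your plan matches the paper's proof at every structural joint: connectedness of $\C_x(S)$ (your $\mathcal{N}_h$), edge realization by bounding pair maps, and reduction of the vertex stabilizer to lower genus via a Birman exact sequence. The paper packages the reduction as the general fact that a group acting on a connected complex is generated by the vertex stabilizers together with any set of elements realizing all edges; your bookkeeping with $\phi^{-1}f$ is a correct unwinding of this.

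There are, however, two places where the outline has real gaps, and they are not quite where you expect them. The edge-realization step involves no case analysis on the genus distribution of the cobounded subsurface: the paper's Lemma~\ref{bp} is a one-picture argument --- view $S$ as a torus with handles attached, put $c_i$ and $c_{i+1}$ as two parallel longitudes on the torus, and take $a,b$ to be two parallel meridians, so that $T_a^{}T_b^{-1}$ is a bounding pair map carrying $c_i$ to $c_{i+1}$. The genuine gap is the base of the induction. Cutting a genus-$3$ surface along $c$ lands you in genus $2$, and there $\C_x(S)$ is $0$-dimensional, hence disconnected, so the connectedness input is unavailable and the entire edge-realization strategy collapses. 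The paper devotes all of Section~\ref{sec:2} to genus $2$, using a different complex: the $1$-dimensional cell complex $\B_x(S)$ of reduced weighted multicurves, which is shown to be a tree, followed by a Bass--Serre analysis (the quotient is a tree, edge stabilizers are trivial, and vertex stabilizers are generated by separating twists). Dismissing this as a ``direct argument'' for small genus is where your plan underestimates the work; without it, or an alternative of comparable substance, the induction has nothing to stand on. A smaller stylistic point: the paper cuts along $c$ and passes to a closed surface of genus $g-1$ with two marked points rather than to a bounded surface with a capping-off convention, which streamlines the bookkeeping around the Dehn twist $T_c$ and the two-step Birman exact sequence.
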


Nontrivial separating twists exist only when the genus $g$ of $S$ is at least $2$, and nontrivial bounding pair maps exist only when $g\ge 3$.  Thus when $g=1$ the Torelli group is trivial, a fact known long before the Birman--Powell theorem, and when $g=2$ the theorem says that $\I(S)$ is generated by separating twists. When $g\ge 3$, separating twists do not generate all of $\I(S)$ but only a subgroup of infinite index known as the Johnson kernel \cite{djabelian}.  On the other hand, it is easy to express separating twists as products of bounding pair maps when $g\ge3$, as we recall in Proposition~\ref{seps}, so bounding pair maps alone generate $\I(S)$ when $g\ge 3$.  It is also easy to express all bounding pair maps in terms of those where the two curves that specify the map cut off a genus $1$ subsurface of $S$; see Proposition~\ref{bps}.

\smallskip

In this paper we give a proof of the Birman--Powell theorem that is in line with the standard proofs of Dehn's theorem on $\Mod(S)$ (see, e.g., \cite[Theorem 4.1]{fm}).  Dehn in fact found a finite set of twist maps that generate $\Mod(S)$, and the standard proofs of Dehn's theorem usually prove this as well.  Finding a finite set of bounding pair maps that generate $\I(S)$ when $g\ge3$ is more difficult, and was first done by Johnson \cite{dj1}.  We do not attempt to reprove the finite generation here.  Recently Putman \cite{cubic} has greatly improved Johnson's finite generation result by showing that the number of generators can be reduced from Johnson's exponential function of $g$ to a cubic function of $g$.

The genus $2$ case of the Birman--Powell theorem was not stated explicitly in their original papers, but can easily be deduced from their methods.  The genus $2$ case is exceptional not only in the types of generators that are needed, but also in the fact that $\I(S)$ is not finitely generated in this case, a result due originally to McCullough and Miller \cite{mcm} and subsequently improved by Mess \cite{gm} who showed that $\I(S)$ is a free group on a countably infinite set of twist generators. Another more recent proof of this can be found in \cite{bbm} and we give a version of this proof at the end of Section~\ref{sec:2}.

Before sketching the idea for the new proof of the Birman--Powell theorem, let us say a few words about the two prior proofs in the literature.

\medskip
\noindent
\emph{First proof: Birman and Powell 1970s}.  This starts with a fact from group theory.  Suppose we have a short exact sequence of groups 
\[ 1 \longrightarrow A \longrightarrow B \stackrel{\pi}{\longrightarrow} C \longrightarrow 1. \]
Let $\{b_1,\dots,b_n\}$ be a finite set of generators for $B$, and suppose we have a presentation for $C$ in terms of the generating set $\{ \pi(b_i) \}$.  The relators for the presentation of $C$ are words in the $\pi(b_i)$, and to each such relator $w$, the corresponding product of the $b_i$ gives an element $\widetilde w$ of $A$ if we identify $A$ with the kernel of $\pi$.  It is then a basic fact that the collection of all such $\widetilde w$, together with their conjugates in $B$, forms a generating set for $A$ (see the proof of \cite[Theorem 2.1]{mks}).

In 1961 Klingen \cite{klingen} gave an algorithm for finding a presentation of $\Sp(2g,\Z)$ and ten years later Birman \cite[Theorem 1]{jb} used this algorithm to give an explicit finite presentation for $\Sp(2g,\Z)$.  Birman's presentation therefore gave a generating set for $\I(S)$ as above, but it was not immediately clear how to interpret the generators geometrically.  In 1978, Powell recognized Birman's generators as Dehn twists about separating curves and bounding pair maps, or products of these \cite[Theorem 2]{jp}.  Neither the Birman paper nor the Powell paper contain complete details, as in both cases the required calculations are lengthy and technical.

\medskip
\noindent
\emph{Second proof: Putman 2007}.  Here the starting point is the fact that if a group $G$ acts on a simply-connected simplicial complex $X$ in such a way that the quotient of $X$ by $G$ is simply connected, then $G$ is generated by stabilizers of simplices.

Putman \cite{ap} applied this strategy to the action of $\I(S)$ on the curve complex $\C(S)$.  This is the simplicial complex whose vertices are the isotopy classes of nontrivial simple closed curves in $S$, with simplices corresponding to collections of disjoint curves. Harer had shown that $\C(S)$ is $(2g-3)$-connected, hence simply connected when $g\ge2$.  Putman then showed that $\C(S)/\I(S)$ is simply connected when $g\ge2$. 

By the fact about group actions stated above, this reduces the problem to understanding stabilizers of simplices of $\C(S)$.  Just as in the proof that $\Mod(S)$ is generated by Dehn twists, this step can be accomplished by induction on $g$ using the Birman exact sequence. 

\medskip
\noindent
\emph{A new proof.} This also proceeds by induction on genus, but is based on a different fact about group actions on complexes that only requires the complexes to be connected.  The complex we use is the subcomplex $\C_x(S)$ of $\C(S)$ spanned by curves that can be oriented so as to represent some fixed primitive class $x \in H_1(S)$.  The key fact is therefore:

\begin{theorem}[Putman]
\label{connected}
For $g \geq 3$, the complex $\C_x(S)$ is connected.
\end{theorem}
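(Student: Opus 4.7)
The plan is to prove connectivity of $\C_x(S)$ by induction on the geometric intersection number $i(a,b)$ of a pair of vertices $a, b \in \C_x(S)$. The base case $i(a,b) = 0$ is trivial, as then $a$ and $b$ are either equal or span an edge of $\C_x(S)$. For the inductive step, assume $i(a,b) \ge 1$ and orient $a$ and $b$ so each represents $x$. Skew-symmetry of the intersection form on $H_1(S)$ gives $\hat{\imath}(a,b) = x \cdot x = 0$, so the intersection points split evenly between positive and negative signs, and there must exist two intersection points $p, q$ that are adjacent along $b$ and have opposite signs. The aim is to produce a third curve $c \in \C_x(S)$ with $i(c, a), i(c, b) < i(a, b)$ by a surgery at $\{p, q\}$; the inductive hypothesis then joins $a$ to $b$ through $c$.

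The surgery proceeds as follows. Let $\beta \subset b$ be the short arc from $p$ to $q$ meeting $a$ only at its endpoints, $\beta'$ its complement in $b$, and $\alpha_1, \alpha_2 \subset a$ the two arcs of $a$ between $p$ and $q$. Regrouping these four arcs into closed curves yields two new pairs of disjoint simple closed curves $(c_1^A, c_2^A) = (\alpha_1 \cup \beta,\ \alpha_2 \cup \beta')$ and $(c_1^B, c_2^B) = (\alpha_1 \cup \beta',\ \alpha_2 \cup \beta)$; after small transverse push-offs, each has at most $i(a,b) - 2$ intersections with $a \cup b$. A direct computation using the opposite signs at $p$ and $q$ yields $[c_1^A] + [c_2^A] = [a] - [b] = 0$ and $[c_1^B] + [c_2^B] = [a] + [b] = 2x$, so writing $y = [c_1^A]$ the four homology classes are $\{y,\ -y,\ x + y,\ x - y\}$. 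If $y \in \{0, \pm x, \pm 2x\}$ then one of the four curves represents $\pm x$, and with an appropriate orientation it is the desired $c$, so induction concludes.

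The main obstacle is the residual case in which no single surgery at an adjacent opposite-sign pair produces a class in $\{\pm x\}$. Here I would iterate: each surgered curve already has strictly smaller intersection with both $a$ and $b$, and by a further surgery (either against $b$ again, or against an auxiliary reference curve in class $x$ chosen disjoint from $a$) one can try to walk the homology class towards $x$ while maintaining the intersection bound. The genus hypothesis $g \ge 3$ must enter essentially at this step, since for small genus the complement of $a \cup b$ together with any already-constructed surgered curves would be too small to accommodate the required bridging curves as simple, essential, and nonseparating representatives of the correct homology class. The careful combinatorial bookkeeping of this iterative surgery, together with verification of simplicity, essentiality, and strict decrease of intersection numbers at every stage, is what I anticipate as the technical heart of the proof.
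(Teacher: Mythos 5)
Your approach --- induction on geometric intersection number via surgery at adjacent opposite-sign intersection points --- is genuinely different from the paper's, but it has a real gap at precisely the point you flag as the ``technical heart.''  The easy cases you dispose of (when the auxiliary class $y$ lies in $\{0,\pm x,\pm 2x\}$) are not where the difficulty lives: for generic $a,b\in\C_x(S)$ there is no reason for the surgered classes to land anywhere near $\pm x$, and your proposed remedy --- ``iterate,'' ``walk the homology class towards $x$,'' possibly surgering against an auxiliary curve in class $x$ --- is not an argument.  In particular, for the induction to close you need to produce a vertex $c\in\C_x(S)$ with $i(c,a)<i(a,b)$ \emph{and} $i(c,b)<i(a,b)$, and it is not at all clear that such a $c$ even exists, let alone that a finite sequence of surgeries locates one.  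There is also a smaller issue: you assert that $c_1^A$ and $c_2^A$ are disjoint, but $\alpha_1$ and $\beta'$ can meet in their interiors at the other points of $a\cap b$, so the surgered curves need not be disjoint from each other.  Finally, the way $g\ge 3$ enters your argument is left entirely heuristic, whereas in the paper it appears at a precise and isolated point.

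The paper takes a wholly different route that sidesteps the residual case.  It embeds $\C_x(S)$ into the larger complex $\B_x(S)$ of reduced weighted oriented multicurves representing $x$, proves $\B_x(S)$ is contractible (hence connected), and then shows the map $\pi_0\C_x(S)\to\pi_0\B_x(S)$ is injective.  Concretely, any edge path $\gamma$ in $\B_x(S)$ with endpoints in $\C_x(S)$ is pushed into $\C_x(S)=W^{-1}(1)$ by repeatedly lowering the maximum of the weight function $W(\sum k_ic_i)=\sum k_i$: first one pinches dual graphs to replace edges where $W$ is constant and maximal, then one reduces isolated maxima using ``P-edges'' and the connectivity of a pair-of-pants complex $\PP(R)$ associated to the cut surface.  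The $g\ge 3$ hypothesis enters exactly in proving $\PP(R)$ is connected.  What this buys is that one never has to hunt for a single good bridging curve $c$ in the correct homology class: working with weighted multicurves gives plenty of room to interpolate, and the hard combinatorics is transferred to the contractibility of $\B_x(S)$ and the connectivity of $\PP(R)$, both of which are tractable.  If you want to salvage a surgery-based proof, you would need a concrete termination argument for your iteration, and I do not see one; I'd recommend instead absorbing the $\B_x(S)$ strategy.
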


Putman's proof of this in \cite[Theorem 1.9]{apnote} uses Johnson's explicit finite generating set for $\I(S)$, which in turn depends on the Birman--Powell theorem.  The idea of the new approach is to reverse these dependencies to give a proof of Theorem~\ref{connected} from scratch and deduce the Birman--Powell theorem from this.

The induction for the new proof of the Birman--Powell theorem starts with the case $g=2$.  The complex $\C_x(S)$ has dimension $0$ in this case and is not connected, so instead we use a larger complex $\B_x(S)$ that appears in the proof of Theorem~\ref{connected}.  It was shown in \cite{bbm} that $\B_x(S)$ is contractible for all $g$, and we reprove this here.  For $g\ge 3$ we only need that $\B_x(S)$ is connected, but when $g=2$ the complex $\B_x(S)$ is $1$-dimensional and we need that it is a tree so that we can use basic facts about groups acting on trees.

\medskip

Our goal is to give a self-contained proof of the Birman--Powell theorem, and so this paper contains a number of proofs of known results, even when our proofs are not essentially different from the existing proofs.  

\medskip

\medskip
\noindent
\emph{Outline of the paper.}  In Section~\ref{bx} we recall from \cite{bbm} the construction of a complex $\B_x(S) \supset \C_x(S)$ whose points are isotopy classes of oriented, weighted multicurves (collections of finitely many disjoint curves) in $S$ representing the homology class $x$, and we show that $\B_x(S)$ is contractible.  In Section~\ref{sec:connect} we  prove Theorem~\ref{connected} by showing the map $\pi_0\C_x(S)\to\pi_0\B_x(S)$ is injective.  This is where most of the novelty of the paper occurs.  The inductive step in the proof of the Birman--Powell theorem is given in Section~\ref{sec:proof}.  Finally, in Section~\ref{sec:2} we complete the proof, using $\B_x(S)$ directly to handle the base case of the induction, genus~$2$.

\medskip
\noindent
\emph{Acknowledgments.} We would like to thank Tara Brendle, Leah Childers, Thomas Church, John Etnyre, Chris Leininger, Andy Putman, Saul Schleimer, and the referee for helpful comments and discussions.


\section{Representing homology classes by multicurves}
\label{bx}

In this section we reformulate some constructions from \cite{bbm} involving the representations of elements of $H_1(S)$ by linear combinations of disjoint oriented curves in $S$.  

By a \emph{multicurve} in $S$ we mean a collection, possibly empty, of finitely many disjoint simple closed curves in $S$, none of which bounds a disk in $S$ and no two of which bound an annulus.  Usually we will not distinguish between a multicurve and its isotopy class.  If orientations are specified for each curve $c_i$ in a multicurve $c=c_1 \cup \cdots \cup c_n$, then a linear combination $\sum_ik_ic_i$ with coefficients $k_i\in \Z$ determines a class $[\sum_ik_ic_i]$ in $H_1(S)$.  If we allow coefficients $k_i\in\R$, then $\sum_ik_ic_i$ gives a class in $H_1(S;\R)$.  By reorienting the curves $c_i$ if necessary we can assume $k_i\ge 0$ for each $i$. 
The linear combinations $\sum_ik_ic_i$ then correspond to points in the first orthant $[0,\infty)^n$ in $\R^n$. For each oriented multicurve $c$ we have a corresponding orthant $O(c)$, and we can form a space $\A(S)$ by starting with the disjoint union of all such orthants $O(c)$, one for each isotopy class of oriented multicurves $c$, and then identifying the faces obtained by setting some coefficients $k_i$ equal to $0$ with the orthants corresponding to the multicurves obtained by deleting the corresponding curves $c_i$.  (When $c$ is empty, the orthant $O(c)$ reduces to just the origin in $\R^0$, so the origins of all the orthants $O(c)$ are identified.)

The natural map $h\cln\A(S)\to H_1(S;\R)$ sending a weighted oriented multicurve $\sum_i k_i c_i$ to its homology class is linear on each orthant $O(c)$.  For a nonzero class $x \in H_1(S;\R)$ we define $\A_x(S) =h^{-1}(x)$.  This is a cell complex whose cells are the intersections of orthants $O(c)=[0,\infty)^n$ with affine planes in $\R^n$.  These `cells' $E(c)=O(c)\cap h^{-1}(x)$ can be noncompact, so $A_x(S)$ will not be a cell complex in the usual sense, but something more general.  To guarantee that the cells $E(c)$ are compact we need to impose a further condition on the oriented multicurves $c$, namely that if we translate the affine plane that determines $E(c)$ until this plane passes through the origin, which amounts to taking $ O(c) \cap h^{-1}(0)$, then $ O(c) \cap h^{-1}(0)= \{0\}$.  In other words, no nontrivial linear combination $\sum_i k_i c_i$ with each $k_i\ge 0$ represents $0$ in $H_1(S;\R)$.  Taking only orthants $O(c)$ for $c$ satisfying this extra condition yields a subspace $\B(S)$ of $\A(S)$.  The corresponding subspace $\B_x(S)$ of $\A_x(S)$ has the structure of a cell complex in the usual sense, with cells $E(c)$ that are compact convex polyhedra. 

There is another way to characterize the compact cells $E(c)$ that is somewhat more geometric:

\begin{prop}
\label{compact}
A cell $E(c)$ is compact if and only if no submulticurve of $c$ with the induced orientation from $c$ is a boundary, representing $0$ in $H_1(S)$.
\end{prop}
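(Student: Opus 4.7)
The forward direction is immediate: if some submulticurve $\bigcup_{i \in J} c_i$ of $c$ represents $0 \in H_1(S)$ with its induced orientation, then the $0/1$ vector $\mathbf{1}_J$ lies in $O(c) \cap h^{-1}(0)$, and (provided $E(c)$ is nonempty) translating any point of $E(c)$ by positive multiples of $\mathbf{1}_J$ stays inside $E(c)$, so $E(c)$ is noncompact.

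For the reverse direction, the plan is a discretization argument. By standard polyhedral convexity, a nonempty polyhedron $E(c) = O(c) \cap h^{-1}(x)$ is compact iff its recession cone $K := O(c) \cap h^{-1}(0)$ equals $\{0\}$, so it suffices, given an arbitrary nonzero $v \in K$, to produce a nonempty $J \subseteq \{1, \dots, n\}$ with $\mathbf{1}_J \in K$. From the long exact sequence of the pair $(S, c)$ (via a regular neighborhood), the kernel of $h \colon \R^n \to H_1(S; \R)$ is the $\R$-span of the boundary cycles $\partial \Sigma_j = \sum_i \epsilon_{ij} c_i$, where $\Sigma_1, \dots, \Sigma_m$ are the components of $S$ cut along $c$ and $\epsilon_{ij} \in \{-1, 0, +1\}$ records the sign with which $c_i$ occurs in $\partial \Sigma_j$ (with $\epsilon_{ij} = 0$ when $c_i \not\subseteq \partial \Sigma_j$ or when both sides of $c_i$ lie in the same $\Sigma_j$). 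The only relation among the $\partial \Sigma_j$ is $\sum_j \partial \Sigma_j = 0$.

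The main step is a ``max trick''. Write $v = \sum_j a_j \partial \Sigma_j$, let $M = \max_j a_j$, and put $T = \{j : a_j = M\}$. Since $v \neq 0$, the $a_j$ are not all equal, so $T$ is a proper nonempty subset of $\{1, \dots, m\}$; set $\Sigma_T = \bigcup_{j \in T} \Sigma_j$. A curve $c_i$ appears in $\partial \Sigma_T$ precisely when its two sides belong to different components of $S \setminus c$, one in $T$ and one not. Letting $j^+(i)$ and $j^-(i)$ index the components on the positive and negative sides of $c_i$, so that $v_i = a_{j^+(i)} - a_{j^-(i)}$, the constraint $v_i \geq 0$ rules out the configuration $j^-(i) \in T,\ j^+(i) \notin T$, leaving only $j^+(i) \in T,\ j^-(i) \notin T$, in which case $c_i$ occurs in $\partial \Sigma_T$ with coefficient $+1$ in its original orientation. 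The resulting set $J$ of boundary curves is nonempty because $S$ is connected and $T$ is a proper nonempty union of components, so $\sum_{i \in J} c_i = \partial \Sigma_T$ represents $0$ in $H_1(S)$, producing the required submulticurve. The crux, and the step I expect to require the most care, is exactly this discretization: converting the continuous witness $v \in K$ into the combinatorial witness $\mathbf{1}_J$ while respecting the nonnegativity constraints.
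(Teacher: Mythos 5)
Your proof is correct and uses essentially the same key idea as the paper: expressing a nonzero recession direction in terms of the boundary cycles $\partial \Sigma_j$, taking the set of regions where the coefficient is maximal, and using the nonnegativity constraints to conclude that the boundary of this proper sub-union is a $0/1$ combination of the $c_i$. You spell out the sign analysis (ruling out $-1$ coefficients via $v_i \geq 0$) and the polyhedral framing (recession cone) more explicitly than the paper does, but the underlying argument is the same.
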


We call an oriented multicurve satisfying this property \emph{reduced}.  As we will see in the proof, an equivalent condition is that no submulticurve with its induced orientation is the oriented boundary of an oriented subsurface of $S$.

\begin{proof}
As noted earlier, $E(c)$ is compact if and only if no nontrivial linear combination $\sum_i k_i c_i$ with each $k_i\ge 0$ is trivial in $H_1(S)$.  This obviously implies that no submulticurve is a boundary, either in the homological sense or the geometric sense of bounding an oriented subsurface of $S$.  

For the converse, suppose that some nontrivial sum $\sum_i k_i c_i$, with each $k_i\in [0,\infty)$, is a boundary, say $\sum_i k_i c_i = \bdy\bigl(\sum_j l_j R_j\bigr)$ where the $R_j$'s are the closures of the components of $S-c$, oriented via a fixed orientation of $S$.  Since $\bdy\bigl(\sum_j R_j\bigr)=0$, we can add a large constant $l$ to each $l_j$ to guarantee that $l_j>0$ for all $j$.  Let $R$ be the union of the $R_j$'s with maximal $l_j$.  Since $\sum_i k_i c_i$ is nontrivial, the surface $R$ is a proper subsurface of $S$.  Then the equation $\sum_i k_i c_i = \bdy\bigl(\sum_j l_j R_j\bigr)$ implies that $\bdy R$ is a nonzero linear combination of the oriented curves $c_i$ with each coefficient equal to $0$ or $1$.  Thus $c$ has a null-homologous submulticurve, in fact a submulticurve that bounds a subsurface of $S$. 
\end{proof}

A multicurve $c$ has a dual graph $G(c)$ whose vertices correspond to components of $S - c$ and whose edges correspond to components of $c$.  If $c$ is an oriented multicurve, then $G(c)$ becomes an oriented graph by fixing an orientation of $S$ and a rule for passing from an orientation of $c$ to a transverse orientation.  The condition for $c$ to be reduced can be translated into a condition on $G(c)$:  

\begin{prop}
An oriented multicurve $c$ is reduced if and only if its dual graph $G(c)$ is recurrent: Every edge of $G(c)$ lies in a loop consisting of a finite sequence of edges traversed in the directions given by their orientations in $G(c)$.  Such loops can be assumed to be embedded.
\end{prop}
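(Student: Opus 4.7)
The plan is to prove each direction of the equivalence by contrapositive, translating between oriented submulticurves $c'\subseteq c$ that bound a subsurface and unidirectional cuts in the dual graph $G(c)$. The convention used to pass from an orientation of $c$ to a transverse orientation, together with the orientation of $S$, identifies the following two notions: for a union $R$ of regions of $S\setminus c$, the curves of $\partial R$ with their inherited orientations from $c$ all match the boundary orientation of $R$ if and only if every edge of $G(c)$ crossing between $R$ and its complement points in the same direction.

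For ``not reduced $\Rightarrow$ not recurrent,'' I would start from a null-homologous submulticurve and apply the argument in the proof of Proposition~\ref{compact} to replace it by a submulticurve $c'\subseteq c$ whose inherited orientation realizes it as $\partial R$ for some subsurface $R$. The dual edges of $c'$ are then a unidirectional cut in $G(c)$, and since any directed closed walk must cross a cut the same number of times in each direction, no edge of $c'$ can lie in a directed loop. For ``not recurrent $\Rightarrow$ not reduced,'' I would take an edge $e$ of $G(c)$, with head $v$ and tail $u$, that lies in no directed loop. Letting $F$ be the set of vertices reachable from $v$ by directed paths, I note $u\notin F$ (else $e$ would close a loop) and $F$ has no outgoing edges (by its definition), so every edge between $F$ and its complement---including $e$---points into $F$. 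The subsurface $R=\bigcup_{j\in F}R_j$ then has $\partial R$, with its $c$-inherited orientation, a null-homologous submulticurve of $c$ containing the curve dual to $e$.

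The embeddedness claim is a short graph-theoretic observation: among all directed loops through a given edge, a shortest one cannot revisit a vertex, for splitting at such a vertex would produce two shorter directed closed walks, one of which still contains the edge. The main obstacle I anticipate is the orientation bookkeeping in the first paragraph: precisely matching the convention for orienting dual edges with the convention for the induced boundary orientation on subsurfaces, so that ``unidirectional cut'' and ``submulticurve equal to $\pm\partial R$ with its $c$-orientation'' are genuinely the same condition. Once this dictionary is in place, both directions reduce to the same easy fact about directed graphs.
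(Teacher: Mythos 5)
Your proof is correct. For the forward direction (not reduced $\Rightarrow$ not recurrent), your argument is essentially the paper's: a null-homologous submulticurve, upgraded via the proof of Proposition~\ref{compact} to a genuine $\partial R$, dualizes to a unidirectional cut in $G(c)$, and a directed loop would cross that cut with nonzero net count---impossible. (The paper phrases this as a closed loop in $S$ having zero algebraic intersection with $c'$, but it is the same observation.) For the converse, however, your route differs from the paper's: the paper forms the quotient graph $\overline G$ by collapsing each component of the subgraph of edges lying in directed loops, argues $\overline G$ has no such edges, and then locates a sink vertex whose incoming edges give the bounding submulticurve. You instead fix an edge $e=(u,v)$ not in any directed loop and take $F$ to be the set of vertices reachable by directed paths from $v$: then $v\in F$, $u\notin F$ (else $e$ would complete a loop), $F$ is closed under outgoing edges by construction, and hence every edge between $F$ and its complement (in particular $e$) points into $F$, so $R=\bigcup_{j\in F}R_j$ is a proper nonempty subsurface whose boundary is the desired bounding submulticurve. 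This is a more direct argument that avoids the quotient construction and produces the cut containing the specified bad edge, rather than merely some cut. Your observation that a shortest directed loop through an edge must be embedded (else split at a repeated vertex) also fills in the embeddedness claim, which the paper states without comment. The one point you rightly flag as needing care---aligning the dual-edge orientation convention with the induced boundary orientation so that ``unidirectional cut'' and ``$\pm\partial R$ as a submulticurve with the $c$-orientation'' coincide---is indeed the place where the bookkeeping must be checked, but once the convention is fixed it works out exactly as you describe.
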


This recurrence condition can be restated in terms of $c$ as saying that through every point of $c$ there passes a closed oriented loop in $S$ transverse to $c$ whose algebraic and geometric intersection numbers with $c$ are equal, so the loop always crosses $c$ in the same direction.

\begin{proof}
If $c$ contains a bounding submulticurve $c'$, then any closed loop in $S$ intersecting $c'$ transversely must cross $c'$ in both directions, so $G(c)$ is not recurrent.  Conversely, suppose $G(c)$ is not recurrent.  Let $\overline{G}$ be the quotient graph of $G(c)$ obtained by collapsing to a point each component of the subgraph consisting of edges that lie in closed loops of oriented edges.  Then $\overline{G}$ contains no such edges. Hence $\overline{G}$ must contain at least one ``sink" vertex whose abutting edges are all oriented toward the vertex.  These edges correspond to a bounding submulticurve of $c$.
\end{proof}

\begin{prop}
\label{dim}
The cell $E(c)$ of $\B_x(S)$ corresponding to an oriented multicurve $c$ has dimension equal to one less than the number of connected components of $S - c$.
\end{prop}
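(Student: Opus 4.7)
The plan is to express the cell $E(c)$ as the intersection of an orthant with an affine subspace, thereby reducing the dimension count to computing the kernel of a homology map out of $\R^n$. Write $c = c_1 \cup \cdots \cup c_n$ and let $R_1,\ldots,R_m$ denote the closures of the components of $S-c$, each oriented consistently with the fixed orientation of $S$. By construction $E(c) = O(c) \cap h^{-1}(x) \subset \R^n$, where $h\colon \R^n \to H_1(S;\R)$ sends $(k_1,\ldots,k_n)$ to $\sum_i k_i[c_i]$. Since the relative interior of $E(c)$, where all $k_i>0$, is open in the affine subspace $\R^n \cap h^{-1}(x)$, the dimension of $E(c)$ equals the dimension of that affine subspace, which in turn equals $\dim \ker(h)$. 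So the task reduces to proving $\dim \ker(h) = m-1$.

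For this I would identify $\ker(h)$ with the image of an oriented boundary map $\partial\colon \R^m \to \R^n$ defined by $\partial(R_j) = \sum_i \epsilon_{ij} c_i$, where $\epsilon_{ij}\in\{-1,0,+1\}$ records how $c_i$ appears in the oriented boundary of $R_j$. The key input is the standard fact that a weighted sum $\sum_i k_i c_i$ is null-homologous in the closed surface $S$ if and only if it is the oriented boundary of a $2$-chain on $S$, and that any such $2$-chain is a linear combination of the regions $R_j$. Formally this is the piece
\[ H_2(S,c) \xrightarrow{\partial} H_1(c) \to H_1(S) \]
of the long exact sequence of the pair $(S,c)$, combined with the excision identification $H_2(S,c) \cong \bigoplus_j H_2(R_j,\partial R_j) \cong \R^m$.

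Finally, $\ker(\partial)$ consists of those formal combinations $\sum_j l_j R_j$ that are $2$-cycles in $S$. Since $S$ is a closed connected oriented surface, its space of real $2$-cycles is the line spanned by the fundamental class $[S] = \sum_j R_j$, so $\dim \ker(\partial)=1$, and rank-nullity gives $\dim \ker(h) = \dim \operatorname{im}(\partial) = m-1$, as required. The one point requiring any care is the sign bookkeeping in defining $\partial$: if a single curve $c_i$ has the same region on both sides, it contributes coefficient $0$ to $\partial R_j$, but this is automatically handled by the topological interpretation of $\partial$ as the connecting homomorphism. Apart from this, the argument is routine linear algebra and I do not anticipate a substantive obstacle.
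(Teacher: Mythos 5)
Your argument is correct and is essentially the paper's proof: both identify $\dim E(c)$ with the dimension of the image of the oriented boundary map $\partial\colon\R^m\to\R^n$ from the regions of $S-c$ to the curves of $c$, and both compute this as $m-1$ using that $\ker\partial$ is the one-dimensional space of $2$-cycles spanned by $[S]=\sum_j R_j$. The only cosmetic difference is that you package the identification $\ker h = \operatorname{im}\partial$ via the long exact sequence of the pair $(S,c)$ and excision, whereas the paper phrases the same computation directly in cellular chains; no substantive gap.
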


\begin{proof}
Consider cellular homology with coefficients in $\R$ for a cell structure on $S$ containing $c$ as a subcomplex.  With notation as in the proof of Proposition~\ref{compact}, the regions $R_j$ generate a subgroup $\langle R_j \rangle$ of the $2$-chains and the curves $c_i$ generate a subgroup $\langle c_i\rangle$ of the $1$-chains. The boundary map $\bdy\cln \langle R_j \rangle\to \langle c_i \rangle $ has $1$-dimensional kernel $H_2(S;\R)$ so its image has dimension one less than the number of $R_j$'s.  Cosets of this image are the planes $h^{-1}(x)$, and intersecting one of these with the orthant $O(c)$ gives the cell $E(c)$, with the stated dimension.
\end{proof}

Up until this point the class $x$ was any nonzero element of $H_1(S;\R)$, but from now on we restrict attention to classes in $H_1(S) = H_1(S;\Z)$.

\begin{prop}
For a nonzero class $x\in H_1(S)$, the coefficients of a vertex $\sum_i k_i c_i$ of $\B_x(S)$ are integers.
\end{prop}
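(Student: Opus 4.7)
The plan is to exploit the $0$-dimensionality of vertex cells via the intersection form. By Proposition~\ref{dim}, a vertex $v \in \B_x(S)$ corresponds to an oriented multicurve $c = c_1 \cup \cdots \cup c_n$ whose complement $S - c$ has exactly one component. So the main structural fact I will use is that $S - c$ is connected at a vertex, and I want to leverage this to produce enough $\Z$-valued linear functionals on $H_1(S;\R)$ to pin down the $k_i$'s.

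The key step is, for each index $i$, to construct an integral homology class $\gamma_i \in H_1(S;\Z)$ with $\hat{\imath}(\gamma_i, c_i) = \pm 1$ and $\hat{\imath}(\gamma_i, c_j) = 0$ for $j \ne i$, where $\hat{\imath}$ denotes algebraic intersection. To do this, I would pick a small tubular neighborhood $N_i$ of $c_i$ disjoint from the other $c_j$; its two ``sides" $N_i - c_i$ both lie in $S - c$. Since $S - c$ is connected, an arc in $S - c$ joins these two sides; closing it up across $c_i$ once produces an oriented loop $\gamma_i$ in $S$ meeting $c_i$ transversely in one point and disjoint from the other $c_j$. Its homology class has the stated intersection numbers.

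Once the $\gamma_i$ are in hand, the proof is immediate: the algebraic intersection pairing $H_1(S;\Z) \times H_1(S;\Z) \to \Z$ extends $\R$-bilinearly to $H_1(S;\R)$, and evaluating on $x = \sum_j k_j [c_j]$ yields
\[
\hat{\imath}(\gamma_i, x) \;=\; \sum_j k_j\, \hat{\imath}(\gamma_i, c_j) \;=\; \pm k_i.
\]
Since $[\gamma_i]$ and $x$ both lie in $H_1(S;\Z)$, the left-hand side is an integer, so $k_i \in \Z$.

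The only step with any content is producing the dual loops $\gamma_i$, and that reduces cleanly to the connectedness of $S - c$ together with the observation that the two sides of each $c_i$ both sit in $S - c$. No recurrence or reducedness hypothesis is needed beyond what is already built into the cell $E(c)$ being a vertex, and no induction on genus is required.
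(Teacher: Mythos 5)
Your proof is correct and is essentially identical to the one in the paper: both use Proposition~\ref{dim} to see that $S-c$ is connected, construct for each $c_i$ a dual loop meeting $c_i$ once and disjoint from the other $c_j$, and then read off $k_i$ as an algebraic intersection number with an integral class. You simply spell out in slightly more detail the existence of the dual loop (joining the two sides of $c_i$ by an arc in the connected complement), which the paper leaves implicit.
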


\begin{proof}
By Proposition~\ref{dim} the multicurve $c=\cup_i c_i$ has connected complement, so for each $c_j$ there is a transverse curve $d_j$ intersecting $c_j$ once and disjoint from the other $c_i$'s. The algebraic intersection number of $d_j$ with $\sum_i k_i c_i$ is then $\pm k_j$, so if $\sum_i k_i c_i$ represents an integral homology class, $k_j$ must be an integer.
\end{proof}

\begin{prop}
\label{b contract}
Let $g \geq 1$, and let $x \in H_1(S)$ be any nonzero element.  Then the complexes $\A_x(S)$ and $\B_x(S)$ are contractible.
\end{prop}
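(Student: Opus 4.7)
The plan is to prove contractibility of both complexes in two steps: first deformation-retract $\A_x(S)$ onto $\B_x(S)$, and then contract $\B_x(S)$ to a chosen basepoint.

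\emph{Retracting $\A_x(S)$ onto $\B_x(S)$.} By Proposition~\ref{compact}, a cell $E(c)$ of $\A_x(S)$ is non-compact exactly when $c$ has a null-homologous submulticurve with the induced orientation. On the orthant $O(c) = [0,\infty)^n$ containing $E(c)$, the non-compactness is witnessed by the cone $K(c) = O(c) \cap h^{-1}(0)$ of null-homologous non-negative combinations of the $c_i$. I would define a canonical vector $v(c) \in K(c)$ — for example, a suitably normalized sum over indicator vectors of all bounding submulticurves of $c$ — and flow each point $p \in E(c)$ in the direction $-v(c)$ until the first coordinate hits $0$, thus dropping to a codimension-one face. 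Iterating this flow carries every point into a compact cell, i.e., into $\B_x(S)$.

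\emph{Contracting $\B_x(S)$.} Write $x = n x_0$ with $x_0 \in H_1(S)$ primitive, and let $\gamma$ be an oriented simple closed curve representing $x_0$ (which exists for any primitive class when $g \geq 1$). Set $v_0 = n \gamma \in \B_x(S)$, a vertex. The plan is to contract $\B_x(S)$ to $v_0$ by induction on the geometric intersection number $i(c,\gamma)$ of the supporting multicurve $c$ of the cell $E(c)$ containing a given point. When $i(c,\gamma) = 0$, the union $c \cup \gamma$ is an admissible reduced multicurve, so $E(c \cup \gamma)$ is a cell of $\B_x(S)$ containing both $v_0$ and the given point, and linear interpolation within this cell finishes the contraction. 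When $i(c,\gamma) > 0$, use oriented smoothing of $c$ and $\gamma$ at an intersection to produce a reduced multicurve $c'$ with smaller $i(c',\gamma)$ and the same homology class, and bridge from $E(c)$ to $E(c')$ by a finite sequence of cells sharing common faces, to which the inductive contraction then applies.

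\emph{Main obstacle.} The principal difficulty lies in the surgery step: while oriented smoothing easily produces the multicurve $c'$, joining $E(c)$ and $E(c')$ by an actual path in $\B_x(S)$ requires exhibiting intermediate cells $E(\tilde c)$ having subfaces in both $E(c)$ and $E(c')$ that still represent the class $x$. Producing such a $\tilde c$ canonically, and doing so compatibly as the point in $E(c)$ varies, is the main technical content. An analogous combinatorial check — that the direction $v(c)$ can be chosen so that the flow in the first step is continuous across cell boundaries — is the other place where care is required; both amount to verifying that the combinatorial choices depend only on data that is itself continuous across faces.
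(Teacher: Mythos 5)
Your proposal is a genuinely different route from the paper, but it has a serious gap at precisely the point you identify as ``the main obstacle,'' and I think the obstacle is worse than you suggest.

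The paper reverses your order of operations: it first contracts $\A_x(S)$ and only then drains onto $\B_x(S)$. This ordering is not cosmetic. To contract $\A_x(S)$ the paper associates to each weighted multicurve $\alpha$ a smooth map $f_\alpha\cln S\to S^1$ (built from the annular neighborhoods and the metric dual graph), notes that all these maps are homotopic since they are Poincar\'e dual to $x$, and then takes the straight-line homotopy $(1-t)f_\alpha+tf_\beta$ after lifting to the $\Z$-cover. Reading off the preimages of regular values gives a canonical, continuously varying path $\alpha_t$ in $\A_x(S)$ from any $\alpha$ to a fixed $\beta$. Nothing here needs to stay reduced, which is exactly what makes the construction go through; the draining retraction is applied afterward. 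Your plan of contracting $\B_x(S)$ directly forces you to keep the reducedness constraint at every stage, which is where the difficulty concentrates.

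Two concrete problems with your contraction step. First, ``oriented smoothing of $c$ and $\gamma$ at an intersection'' does not preserve the homology class: resolving intersections of $c$ with $\gamma$ compatibly with orientations produces a multicurve in the class $[c]+[\gamma]=x+x_0$, not $x$. If instead you mean a surgery along an arc of $\gamma$ between two consecutive intersections with $c$, you then have to check both that the result represents $x$ and that it is reduced, and neither is automatic; moreover with $c$ and $\gamma$ in minimal position there are no bigons, so the elementary innermost-disk surgeries are not available. Second, even granting a suitable $c'$, you still need a chain of cells of $\B_x(S)$ with common faces joining $E(c)$ to $E(c')$, and you need this chain to vary continuously with the point of $E(c)$ and to match up across faces of $E(c)$. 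Your induction is on $i(c,\gamma)$, which is a function only of the supporting multicurve and jumps as one crosses a face, so continuity of the resulting deformation is genuinely in doubt. You flag this yourself, but it is not a technicality to be checked later: it is the entire content of the contraction, and without it the argument does not establish Proposition~\ref{b contract}. Your first step, the drain of $\A_x(S)$ onto $\B_x(S)$, is in the same spirit as the paper's (the paper flows by $-t\sum_j\bdy R_j$ over all bounding subsurfaces $R_j$ and iterates), and that part is fine modulo the same continuity-across-faces bookkeeping; but the drain alone does not give contractibility of either complex.
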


\begin{proof}
We follow the plan of the second proof in \cite{bbm}.  First consider $\A_x(S)$.  To prove that $\A_x(S)$ is contractible we will construct a canonical ``linear" path in $\A_x(S)$ joining any two given points $\alpha$ and  $\beta$, assuming that the multicurves underlying $\alpha$ and $\beta$ have first been isotoped to intersect transversely with the minimum number of points of intersection.  With this minimality condition the configuration formed by the union of the two multicurves is unique up to isotopy of $S$, which will ensure that the construction is well defined on isotopy classes.

To a weighted multicurve $\alpha =\sum_ik_i c_i$ representing a point in $\A_x(S)$ we associate a map $f_\alpha\cln S \to S^1$ in the following way.  First, choose disjoint product neighborhoods $c_i \times [0,k_i]$ of the curves $c_i$ in $S$.  (We can assume each $k_i > 0$ by deleting any  $c_i$ with $k_i=0$.)  From these product neighborhoods we obtain a quotient map $q$ from $S$ to the graph $G(c)$ by projecting each $c_i\times[0,k_i]$ to $[0,k_i]$ and then to the corresponding edge of $G(c)$, with the complementary components of the thickened $c$ in $S$ mapping to the corresponding vertices of $G(c)$.  The weights $k_i$ determine lengths for the edges of $G(c)$ making it into a metric graph, with edges oriented via the orientation of $c$.  There is then a natural map $\varphi\cln G(c)\to S^1=\R/\Z$ defined up to rotations of $S^1$ by the condition that it is an orientation-preserving local isometry on each edge of $G(c)$.  Namely, choose a vertex of $G(c)$ and send it to an arbitrary point in $\R$.  This determines a map on adjacent edges sending them isometrically to $\R$ preserving orientations, then continue inductively for edges adjacent to the previous edges.  Loops in $G(c)$ have signed length equal to the algebraic intersection number of  lifted loops in $S$ with $x$, and these intersection numbers are integers since $x$ is an integral homology class, so when we pass to the quotient $\R/\Z$ we have a well-defined map $\varphi\cln G(c)\to S^1$.  Changing the initial vertex or its image in $\R$ has the effect of composing $\varphi$ with a rotation of $S^1$. The composition $\varphi q$ is then a map $f_\alpha\cln S\to S^1$.  This corresponds to the class in $H^1(S;\Z)$ Poincar\'e dual to $x$. We can arrange that $f_\alpha$ is a smooth map by parametrizing the annuli $c_i \times [0,k_i]$ suitably.

For a second point $\beta$ in $\A_x(S)$ we choose annular neighborhoods of its curves that intersect the neighborhood of $\alpha$ in rectangles around the points where $\alpha$ and $\beta$ intersect, and then we construct the associated function $f_\beta$ by the same procedure as for $f_\alpha$.  We would like to define a one-parameter family of functions $S \to S^1$ by the formula $(1-t)f_\alpha + tf_\beta$ for $ 0\le t \le 1$.  This does not quite make sense as it stands since scalar multiplication is not defined for maps to $S^1$, but we can give it meaning by considering the covering space $\widetilde S$ of $S$ corresponding to the kernel of the map $\pi_1(S)\to \pi_1(S^1)$ induced by $f_\alpha$ and $f_\beta$, which are homotopic since $\alpha$ and $\beta$ both represent $x$.  Then $f_\alpha$ and $f_\beta$ lift to maps $\widetilde f_\alpha$ and $\widetilde f_\beta$ from $\widetilde S$ to $\R$, and we can form the linear combination $(1-t)\widetilde f_\alpha + t\widetilde f_\beta$. This is equivariant with respect to the action of $\Z$ as deck transformations in $\widetilde S$ and $\R$, so it passes to a well-defined map $f_t=(1-t)f_\alpha + tf_\beta$ from $S$ to $S^1$.  

The critical points of $f_t$ are the closures of the components of the complement of the union of the annular neighborhoods of $\alpha$ and $\beta$.  In the interiors of the rectangles where these neighborhoods intersect there are no critical points since the gradient vectors of $f_t$ are the vectors $(1-t)\nabla f_\alpha + t\nabla f_\beta$ which are nonzero.  Since there are finitely many complementary components of $\alpha \cup \beta$, the function $f_t$ has finitely many critical values, each of which varies linearly with $t$. For fixed $t$ the complement of the critical values consists of finitely many open intervals in $S^1$, and the preimages of these intervals consist of finitely many open annuli in $S$, thickenings of disjoint curves which are oriented transversely by $\nabla f_t$, with weights given by the lengths of the corresponding intervals in $S^1$.  These curves determine a weighted oriented multicurve representing a point $\alpha_t$ in $\A_x(S)$ by discarding any trivial curves and replacing isotopic curves by a single curve, weighted by the appropriate signed sum of the weights of the isotopic curves. For $t=0$ we have $\alpha_0 = \alpha$ and for $t=1$ we have $\alpha_1=\beta$.  The point $\alpha_t\in \A_x(S)$ varies continuously with $t$ since the functions $f_t$ vary smoothly and the intervals of noncritical values vary continuously, shrinking to length zero when critical values coalesce.  This happens only finitely often for the path $\alpha_t$ since the finitely many critical values are varying linearly with $t$.  By similar reasoning the path $\alpha_t$ varies continuously with the weights on the original multicurve $\alpha$.  Thus by fixing $\beta$ and letting $\alpha$ vary over all of $\A_x(S)$ we obtain a contraction of $\A_x(S)$.

Now we show that $\A_x(S)$ deformation retracts onto $\B_x(S)$, which implies that $\B_x(S)$ is also contractible.  The procedure here will be the same ``draining" process as in \cite{bbm}.  If a point $\sum_i k_i c_i$ in $\A_x(S)$ is not reduced, let $\{R_j\}$ be the collection of oriented compact subsurfaces of $S$ whose oriented boundary is a subset of the $c_i$'s, respecting their given orientations.  We deform $\sum_i k_i c_i$ by subtracting $t\sum_j\bdy R_j$ for increasing $t\ge 0$ until one or more $c_i$'s becomes $0$.  Deleting these $c_i$'s and the $R_j$'s whose boundaries include these $c_i$'s, we then iterate the process until we obtain a reduced weighted multicurve in $\B_x(S)$.  It is clear this process depends continuously on the initial point $\sum_i k_i c_i$ and so defines a deformation retraction of $\A_x(S)$ into $\B_x(S)$.
\end{proof}

\noindent
{\bf Canonical triangulations of \boldmath$\A_x(S)$ and \boldmath$\B_x(S)$.}  Although we will not need this in the rest of the paper, there is a canonical subdivision of the cell complex $\B_x(S)$ as a simplicial complex whose vertices are the integer points of $\B_x(S)$, the linear combinations $\sum_i k_i c_i$ with positive integer coefficients $k_i$.  This subdivision can be obtained as follows.  In the preceding proof we associated to $\sum_i k_i c_i$ a map $f\cln S \to S^1$.  This factors through the oriented metric graph $G(c)$ associated to $\sum_i k_i c_i$, with an induced map $\varphi\cln G(c)\to S^1$.  
Recall that the dimension of the cell $E(c)$ is $1$ less than the number of complementary regions of $c$, which is the number of vertices of $G(c)$.  This cell is subdivided by the various hyperplanes where two vertices of $G(c)$ have the same image under $\varphi$, hyperplanes defined by linear equations in the variables $k_i$ with integer coefficients and integer constant terms.  These hyperplanes subdivide $E(c)$ into simplices whose barycentric coordinates are the lengths of the segments of $S^1=\R/\Z$ between adjacent images of vertices of $G(c)$.  The vertices of the subdivision of $E(c)$ are thus the points $\sum_i k_i c_i$ where all vertices of $G(c)$ have the same image under $\varphi$, which is equivalent to saying that all the coefficients $k_i$ are integers.

The same procedure works more generally for $\A_x(S)$, where the noncompact cells are subdivided into infinitely many simplices.


\section{The complex of homologous curves}
\label{sec:connect}

In this section we prove Theorem~\ref{connected}, that $\C_x(S)$ is connected when $g \geq 3$ and $x$ is any nonzero primitive class in $H_1(S)$.  Recall the basic fact (see, e.g., \cite[Proposition~6.2]{fm}) that primitive classes $x$ are exactly those represented by oriented simple closed curves with coefficient $1$.  Thus when $x$ is primitive, $\C_x(S)$ is the subcomplex of $\B_x(S)$ consisting of the cells that are simplices corresponding to cycles $\sum_i k_i c_i$ where the $c_i$'s are disjoint oriented curves each representing the homology class $x$ and $\sum_i k_i =1$.  To prove that $\C_x(S)$ is connected it will suffice to show that the map $\pi_0\C_x(S) \to \pi_0\B_x(S)$ is injective when $g\ge 3$ since we already know that $\B_x(S)$ is connected.
We will show in fact that each edge path $\gamma$ in $\B_x(S)$ with endpoints in $\C_x(S)$ is homotopic, fixing endpoints, to an edge path in $\C_x(S)$.  

Deforming the edge path $\gamma$ into $\C_x(S)$ will be done by a sequence of local deformations, gradually decreasing the maximum value along $\gamma$ of the ``weight function" $W\cln \B_x(S) \to (0,\infty)$ defined by  
$$
\textstyle{W\bigl(\sum_i k_i c_i\bigr) = \sum_i k_i}
$$
In terms of dual graphs, $W$ measures the total length of all the edges.  The function $W$ is linear on cells of $\B_x(S)$ and takes integer values on vertices.  It follows that the image of $W$ is contained in $[1,\infty)$.  As $x$ is assumed to be primitive, we have $\C_x(S) = W^{-1}(1)$ since $W$ takes the value $1$ on vertices of $\C_x(S)$, hence on simplices of $\C_x(S)$, and if $W=1$ on all vertices of a cell of $\B_x(S)$ then these vertices lie in $\C_x(S)$ and span a simplex of $\C_x(S)$.  

Thus it will suffice to deform $\gamma$ to decrease the maximum value of $W$ along its vertices to $1$.  There will be two main steps.  First, when the maximum occurs at two successive vertices of $\gamma$ we will deform this edge of $\gamma$ across a $2$-cell in $\B_x(S)$  having smaller values of $W$ on all the other vertices of the cell.  Then by a more complicated procedure we will deform $\gamma$ on the two edges surrounding a vertex where $W$ is maximal to decrease the maximum along this part of $\gamma$.

\begin{proof}[Proof of Theorem~\ref{connected}]

Let $\langle v_0,v_1\rangle$ be an edge of $\B_x(S)$ joining vertices $v_0$ and $v_1$.  Associated to the points $v_t$ along this edge, $0\le t \le 1$, are dual graphs $G_t$.  These are oriented metric graphs having two vertices $a$ and $b$ for $0<t<1$, so they have the form shown in the figure below.  Since $G_t$ is recurrent, there is at least one edge from $a$ to $b$ and at least one edge from $b$ to $a$, but the number of edge-loops at $a$ or $b$ can be zero.  

\vspace{-8pt}
\begin{center}
\includegraphics[scale=1]{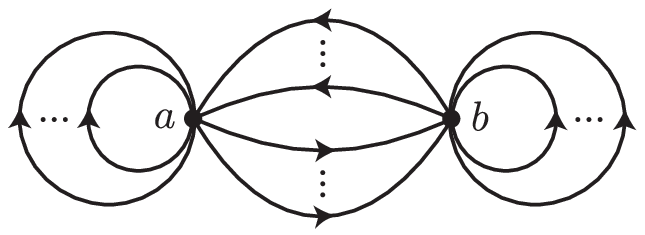}
\end{center}
\vspace{-6pt}

\noindent
As $t$ varies from $0$ to $1$, the lengths of the edges joining $a$ and $b$ vary, with all edges from $a$ to $b$ increasing in length at the same rate that all edges from $b$ to $a$ shrink, or vice versa. This corresponds to varying the weights in the weighted oriented multicurve $v_t$ by adding a multiple of the boundary of the subsurface of $S$ corresponding to $a$ or $b$.  If there are edge-loops at $a$ or $b$, their lengths do not change.  When $t$ reaches $0$ or $1$, at least one edge joining $a$ and $b$ shrinks to length $0$ and the vertices $a$ and $b$ coalesce.

Suppose that $G_t$ has at least two edges entering at vertex $a$.  We can pinch equal-length segments of two of these edges together at $a$ to produce a new metric graph $G'_t$ with three vertices.  Assuming that the subsurface of $S$ corresponding to $a$ is not simply a pair of pants, we can realize $G'_t$ as the dual graph for a point in a $2$-cell of $\B_x(S)$ where the subsurface of $S$ corresponding to the new vertex is a pair of pants, two of whose boundary curves correspond to the two edges of $G_t$ being pinched together, with the third boundary curve corresponding to the new edge of $G'_t$.  Note that pinching $G_t$ to $G'_t$ preserves the recurrence property so we do indeed have a $2$-cell of $\B_x(S)$.  In similar fashion we could realize the graph $G'_t$ obtained by pinching segments of two edges of $G_t$ exiting $a$, or two edges entering or exiting $b$. 

Let us apply this construction when $\langle v_0,v_1\rangle$ is an edge of $\gamma$ with $W(v_0)=W(v_1)$ and this is the maximum value of $W$ along $\gamma$.  The condition $W(v_0)=W(v_1)$ means that at both $a$ and $b$ there will be the same number of entering edges as exiting edges since varying $t$ does not change the total length of all the edges of $G_t$.  If the number of entering and exiting edges is equal to $1$ at both $a$ and $b$ then $G_t$ is just a circle and $W(v_0)=W(v_1)=1$ since the class $x$ is primitive.  We may thus assume the number of entering and exiting edges at $a$, say, is greater than $1$.  We can then perform the pinching operation at $a$ and this decreases the total length of $G_t$, so we obtain a $2$-cell with the function $W$ taking on its maximum value only along the edge $\langle v_0,v_1\rangle$ of this $2$-cell, since the $2$-cell is a convex polygon and $W$ is a nonconstant linear function on this polygon.  If we modify $\gamma$ by pushing the edge $\langle v_0,v_1\rangle$ across this $2$-cell to the complementary edges in the boundary of the cell, we have then improved the situation so that $W$ has strictly smaller values between $v_0$ and $v_1$.  After repeating this step finitely many times we can arrange that the maximum value of $W$ along $\gamma$ occurs only at isolated vertices. 

Now let $\langle v_0,v_1\rangle$ be an edge of $\gamma$ with $W(v_0)$ an isolated maximum of $W$ along $\gamma$.  A special case is when one of the vertices $a$ or $b$ of the graph $G_t$ for points in the interior of this edge has valence $3$ and the corresponding subsurface of $S$ is a pair of pants.  In this case we call $\langle v_0,v_1\rangle$ a \emph{P-edge}.  We wish to reduce to the case that all edges $\langle v_0,v_1\rangle$ of $\gamma$ adjacent to vertices $v_0$ with maximal $W$ value are P-edges, so suppose on the contrary that $\langle v_0,v_1\rangle$ is not a P-edge.  Since $W(v_1)<W(v_0)$, the number of edges in $G_t$ from $a$ to $b$ will be different from the number of edges from $b$ to $a$.  For whichever type of edge there are more of, we can pinch two of the edges of this type together at either $a$ or $b$, and this gives rise to a deformation of $\langle v_0,v_1\rangle$ across a $2$-cell of $\B_x(S)$ as in the preceding paragraph since we assume $\langle v_0,v_1\rangle$ is not a P-edge.  Then for the new path $\gamma$ the new edge at $v_0$ is a P-edge.  Iterating this step, we can arrange that all edges of $\gamma$ at vertices with $W$ maximum are P-edges.

Let $v_0$ be a vertex of $\gamma$ with $W$ maximal and greater than $1$.  If we cut $S$ along the multicurve given by $v_0$ we obtain a cobordism $R$ between two copies of this multicurve, which we label $\bdy_+ R$ and $\bdy_- R$.  A P-edge from $v_0$ then corresponds to a pair of pants in $R$ with two boundary curves in $\bdy_+ R$ or two boundary curves in $\bdy_- R$. If the genus of $S$ is at least $3$, such a pair of pants is uniquely determined by its third boundary curve, which gives a vertex in the curve complex $\C(R)$. Such vertices span a subcomplex $\PP(R)$ of $\C(R)$.  We will prove below that $\PP(R)$ is connected when the genus of $S$ is at least $3$.  Assuming this, we finish the proof of the theorem as follows.  By the preceding paragraph, we can assume the edges in $\gamma$ on both sides of $v_0$ are P-edges.  Since $\PP(R)$ is connected, we can interpolate between these two P-edges a sequence of P-edges, each corresponding to a pair of pants disjoint from the next one. Each pair of successive P-edges then forms two adjacent edges of a $2$-cell of $\B_x(S)$ (either a triangle or a square).  We then deform $\gamma$ by pushing across each of these $2$-cells, thereby replacing the two edges of $\gamma$ adjacent to $v_0$ by a sequence of edges along which $W$ has values smaller than $W(v_0)$ since $W$ is linear on each of the $2$-cells. 

After finitely many iterations of these steps we eventually deform $\gamma$, staying fixed on its endpoints, to a path in $W^{-1}(1)=\C_x(S)$.
\end{proof}

\begin{lemma}
\label{pp}
The complex $\PP(R)$ is connected when $g\ge 3$.
\end{lemma}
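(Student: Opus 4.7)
The plan is to show $\PP(R)$ is path-connected by finding, for any two vertices $c_1, c_2 \in \PP(R)$, a third vertex $c^* \in \PP(R)$ disjoint from both (yielding a path $c_1 - c^* - c_2$).

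First I will record the topology of $R$. Since $v_0$ is a $0$-cell of $\B_x(S)$, Proposition~\ref{dim} gives that $S \setminus c_0$ is connected, so $R$ is a connected compact orientable surface. Setting $n := |c_0|$, the boundary decomposes as $\partial R = \partial_+ R \sqcup \partial_- R$ with $n$ components on each side, and the Euler characteristic yields genus $g_R = g - n$. Primitivity of $x$ combined with $W(v_0) > 1$ forces $n \geq 2$: a multicurve consisting of a single curve representing the primitive class $x$ must have weight $1$.

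Second, I will run a homological intersection argument. Partition the vertices of $\PP(R)$ into $\PP^+(R)$ and $\PP^-(R)$ according to whether the associated pair of pants has two boundaries on $\partial_+ R$ or $\partial_- R$. For $c \in \PP^\pm(R)$, the class $[c] \in H_1(R)$ equals $[\partial_a^\pm] + [\partial_b^\pm]$ for the two associated boundary components. Thus for a properly embedded arc $\gamma$ in $R$ between two boundary components, the algebraic intersection $\gamma \cdot c$ can be read off from the endpoints of $\gamma$ relative to the pair $\{\partial_a, \partial_b\}$. Given $c_1, c_2 \in \PP(R)$, I will choose an arc $\gamma$ between two boundary components of the same sign so that $\gamma \cdot c_1 = \gamma \cdot c_2 = 0$. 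The standard no-bigon argument then lets $\gamma$ be isotoped to be disjoint from $c_1 \cup c_2$, placing its endpoints in a single component $U$ of $R \setminus (c_1 \cup c_2)$.

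Third, with $U$ containing two same-sign boundary components, I construct $c^*$ as the third boundary curve of a regular neighborhood in $U$ of those two components joined by $\gamma$. By construction $c^*$ is disjoint from $c_1 \cup c_2$ and cobounds a pair of pants with two same-sign components, so $c^* \in \PP(R)$. An Euler-characteristic check using $g \geq 3$ ensures $c^*$ is essential. In the same-sign case (both $c_i$ in $\PP^+(R)$ or both in $\PP^-(R)$), $\gamma$ is taken between two opposite-sign components; then $c^*$ is of the opposite sign, hence automatically distinct from $c_1, c_2$. In the mixed-sign case, $\gamma$ is chosen with endpoints on a same-sign pair compatible with both $c_1, c_2$, and $c^*$ is arranged to differ from the relevant $c_i$ by selecting $\gamma$ whose regular neighborhood is not isotopic to the pair of pants of $c_i$.

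The main obstacle will be the mixed-sign case with small $n$ (e.g., $n = 2$ or $n = 3$), where the limited number of same-sign boundary components severely restricts the available arcs. In these cases the component $U$ of $R \setminus (c_1 \cup c_2)$ may have small Euler characteristic, so guaranteeing the existence of a suitable $\gamma$ giving $c^*$ distinct from $c_1$ and $c_2$ demands a careful analysis of the topology of $U$. If a direct path of length two is unavailable in some subcase, the argument may be completed by composing two such moves, first moving to an opposite-sign vertex and then back.
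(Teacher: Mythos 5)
The proposal rests on a claim that cannot hold: that for any two vertices $c_1, c_2 \in \PP(R)$ there is a third vertex $c^*$ disjoint from both, i.e.\ that $\PP(R)$ has diameter at most $2$ (or, with the concluding hedge, at most $4$). Since $\PP(R)$ is a full subcomplex of the curve complex $\C(R)$, distances in $\PP(R)$ are at least as large as distances in $\C(R)$, and $\C(R)$ has infinite diameter. Concretely, choose a pseudo-Anosov $\phi$ of $R$ that fixes each boundary component; then $\phi$ preserves $\PP^+(R)$ and $\PP^-(R)$, and for large $N$ one has $d_{\C(R)}\bigl(c_1, \phi^N(c_1)\bigr) \ge 3$, so no curve at all --- let alone a vertex of $\PP(R)$ --- is disjoint from both $c_1$ and $c_2 := \phi^N(c_1)$. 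Thus the diameter of $\PP(R)$ is infinite and the whole scheme of producing a single common neighbor (or a bounded-length detour through $\PP^\mp$) cannot succeed. Any correct proof must produce paths whose length grows with the complexity of the input pair, which is exactly what the paper's proof does.

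The point at which your argument goes wrong is the sentence ``The standard no-bigon argument then lets $\gamma$ be isotoped to be disjoint from $c_1 \cup c_2$.'' The bigon criterion brings an arc and a curve into minimal position, but algebraic intersection number zero does not imply that the minimal geometric intersection number is zero. Indeed every $c \in \PP(R)$ is a separating curve in $R$ (it bounds a pair of pants on one side), so an arc can cross it an essential, even number of times with no bigons to remove --- for instance, an arc whose intersection with the pair of pants winds around one of the boundary curves, which cannot be pushed past $\partial R$. So the arc $\gamma$ you choose by the homological bookkeeping need not be realizable disjointly from $c_1 \cup c_2$, and in the pseudo-Anosov examples above no such $\gamma$ exists at all.

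For comparison, the paper's proof takes a genuinely different route: it identifies vertices of $\PP(R)$ with isotopy classes of arcs joining two same-side boundary curves, reduces (in at most two edges) to arcs sharing a fixed endpoint pair, and then realizes an arbitrary such arc as the image of a base arc $a_0$ under a point-pushing diffeomorphism. Decomposing the point-push along a chosen generating set of $\pi_1$ expresses the given arc as the endpoint of a chain of arcs $a_0, h_1(a_0), h_1h_2(a_0), \dots$, and each elementary drag is handled by one of the three explicit local moves (i)--(iii). The length of the resulting edge path depends on the word length of the point-push, which is as it must be given the infinite diameter of $\PP(R)$.
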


\begin{proof}
Instead of regarding vertices of $\PP(R)$ as isotopy classes of pairs of pants in $R$ we can regard them as isotopy classes of arcs in $R$ joining two curves of $\bdy_-R$ or two curves of $\bdy_+R$, where the pair of pants corresponding to such an arc is a thickening of the union of the arc and the two curves at its endpoints.  When we say ``arc" in what follows, we will mean an arc giving a vertex of $\PP(R)$ in this way.  A simplex of $\PP(R)$ corresponds to a collection of disjoint arcs joining disjoint pairs of curves of $\bdy R$.  Let us fix a standard arc $a_0$ joining two curves in $\bdy_-R$.  An arbitrary arc $a$ can be connected to an arc joining the same two curves of $\bdy_-R$ as $a_0$ by a sequence of at most two edges of $\PP(R)$, first by choosing an arc $a'$ disjoint from $a$ joining curves of $\bdy_+R$ if $a$ does not already do this, then by choosing an arc disjoint from $a'$ joining the two curves of $\bdy a_0$.  Thus it suffices to connect an arbitrary arc $a$ joining the two curves of $\bdy_-R$ containing $\bdy a_0$ to the arc $a_0$ by a sequence of edges in $\PP(R)$.

First we do this for three special types of arcs.  For the first two cases we fix a genus $0$ subsurface $R_+$ of $R - a_0$ containing $\bdy_+R$ and having just one more boundary curve, which lies in the interior of $R$.

\vfill\eject

\begin{center}
\includegraphics[scale=1]{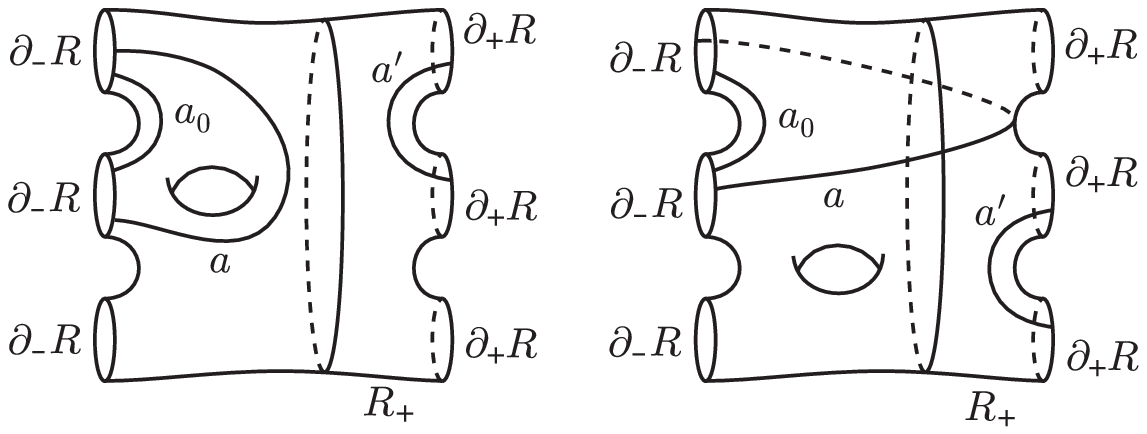}
\end{center}

\smallskip\noindent
({\it i\/}) An arc $a$ that is disjoint from $R_+$ as in the first figure above.  In this case there is an edge joining $a$ to an arc $a'$ in $R_+$ and then an edge joining $a'$ to $a_0$.

\smallskip\noindent
({\it ii\/}) An arc $a$ that intersects $R_+$ in a single subsegment that separates $R_+$ into two components, one of which contains only one curve of $\bdy_+R$ and the other of which contains at least two curves of $\bdy_+R$, assuming that $\bdy_+R$ has at least three curves in total.  This case is illustrated in the second figure above.  In this case we can choose an arc $a'$ in $R_+$ disjoint from $a$ and proceed as in ({\it i\/}).

\smallskip\noindent
({\it iii\/}) An arc $a$ as in the figure below, in the case that $\bdy_+R$ contains just two curves, hence $R$ has genus at least $1$ since $g\ge 3$. Then there are edges joining $a$ to $a'$ and then to $a_0$.

\begin{center}
\includegraphics[scale=1]{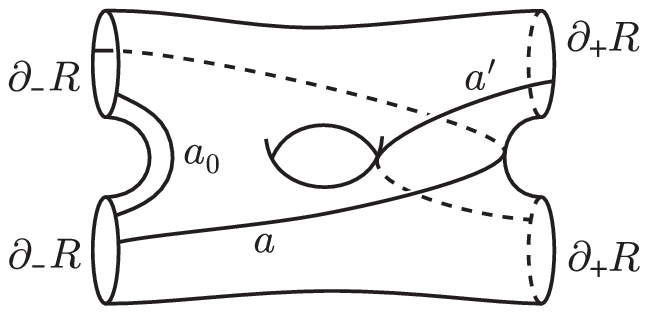}
\end{center}

Now we reduce the general case to these special cases.  Let us regard the curve of $\bdy_-R$ at one end of $a_0$ as a puncture $p$ rather than a boundary component.  If we allow this puncture to move around anywhere in the surface $R'$ obtained from $R$ by filling in this puncture (or equivalently, collapsing the boundary component of $R$ to a point), then any other arc $a$ with the same endpoints as $a_0$ can be isotoped to $a_0$.  This implies that $a_0$ can be transformed to $a$ (or an arc isotopic to $a$) by a diffeomorphism $h$ of $R$ obtained by dragging $p$ around a loop in $R$ based at $p$.  Such diffeomorphisms form a subgroup of $\Mod(R',p)$, the image of the boundary map $d\cln \pi_1(R',p)\to \pi_0\Diff^+(R',p)$ in the long exact sequence of homotopy groups associated to the fibration $\Diff^+(R',p)\to\Diff^+(R')\to R'$ obtained by evaluating diffeomorphisms of $R'$ at $p$, with fiber the subgroup $\Diff^+(R',p)$ of $\Diff^+(R')$ consisting of diffeomorphisms that fix $p$.  The map $d$ associates to a loop at $p$ the diffeomorphism obtained by dragging $p$ around this loop.  (The long exact sequence in fact reduces to a short exact sequence, the Birman exact sequence.)

Since $d$ is a homomorphism, it follows that the diffeomorphism $h$ is isotopic to a composition $h_1 \cdots h_n$ of diffeomorphisms $h_i$ obtained by dragging $p$ around a suitable sequence of loops that generate $\pi_1(R',p)$.  For such generators we can choose loops in $R-R_+$, producing arcs $a=h_i(a_0)$ as in ({\it i\/}), or loops that wind once around a single curve of $\bdy_+R$, producing arcs $a=h_i(a_0)$ as in ({\it ii\/}) or ({\it iii\/}).  By the special cases ({\it i--iii\/}) there is an edge path in $\PP(R)$ joining $a_0$ to $h_i(a_0)$, for each $i$.  By applying the product $h_1\cdots h_{i-1}$ to this edge path we obtain an edge path joining $h_1\cdots h_{i-1}(a_0)$ to $h_1\cdots h_i(a_0)$.  Stringing these edge paths together, we obtain an edge path from $a_0$ to the arc $h(a_0)=a$. 
\end{proof}


\section{The inductive step}
\label{sec:proof}

We will prove Theorem~\ref{main} by induction on genus.  In this section we give the inductive step, deferring the base case of genus $2$ until the next section since it requires methods that are special to that case.

The inductive step will use the following basic fact about group actions:
\begin{quote}
\emph{Suppose a group $G$ acts on a connected cell complex $X$.  Let $A \subseteq G$ be a subset with the property that, for any two vertices of $X$ connected by an edge, there is an element of $A$ taking one vertex to the other.  Then $G$ is generated by the union of $A$ and the set of vertex stabilizers.}
\end{quote}

\noindent
Since $\C_x(S)$ is connected when $g\ge 3$, we may apply this fact to the case of the $\I(S)$ action on $\C_x(S)$.  For $A$ we choose the set of bounding pair maps and twists about separating curves in $\I(S)$.  
The condition on edges is verified in the next lemma.  Here and in what follows we use the notation $T_a$ for the twist along the curve $a$.

\begin{lemma}
\label{bp}
If $v$ and $w$ are vertices of $\C_x(S)$ that are connected by an edge, then there is a bounding pair map $T_a^{}T_b^{-1}$ in $\I(S)$ with $T_a^{}T_b^{-1}(v)=w$.
\end{lemma}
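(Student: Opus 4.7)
The plan is to translate the combinatorial condition (edge in $\C_x(S)$) into topology and then produce an explicit bounding pair that realizes the transition from $v$ to $w$.

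First, I would set up the topology. Since $v$ and $w$ are joined by an edge in $\C_x(S)$, they are disjoint simple closed curves both representing the primitive class $x$. In particular $[v]-[w]=0$ in $H_1(S)$, so $v\cup w$ is null-homologous and therefore $v\cup w$ separates $S$ into two subsurfaces $R_1, R_2$, each with boundary $v\cup w$.  Because $v\neq w$ as isotopy classes (they are distinct vertices of $\C_x(S)$), neither $R_i$ is an annulus, so each has genus at least $1$.  This is exactly the positive-genus cobordism structure we need.

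Next, I would construct the bounding pair $(a,b)$.  The crucial observation is that a bounding pair map $T_a^{}T_b^{-1}$ can move $v$ only if $v$ meets $a$ or $b$.  A natural attempt is to pick a simple closed curve $a$ in $S$ that meets $v$ transversely in two points of opposite algebraic sign (so $\hat\imath(v,a)=0$ and hence $T_a^{}$ preserves $[v]=x$), is disjoint from $w$, and is chosen so that $T_a(v)$ is isotopic to $w$; and then to pick $b$ disjoint from $a$, $v$, $w$ with $[b]=[a]$ non-separating, which makes $(a,b)$ a bounding pair.  Since $b$ is disjoint from $v$, one then has $T_a^{}T_b^{-1}(v)=T_a(v)=w$, and $(a,b)$ being a bounding pair gives $T_a^{}T_b^{-1}\in\I(S)$.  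The curve $a$ is built from the cobordism: an embedded arc $\delta$ in $R_1$ joining $v$ to $w$, together with a band-sum or handle-slide-type construction of $v$ and a parallel of $w$ along $\delta$, yields a candidate $a$; the companion $b$ comes from a disjoint parallel constructed using the genus of $R_2$ (or $R_1$).

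The main obstacle is engineering $a$ and $b$ simultaneously so that $(a,b)$ is a valid bounding pair (disjoint, non-separating, homologous) and the twist relation $T_a^{}T_b^{-1}(v)=w$ holds on the nose.  This verification reduces to a careful tracking of how the Dehn twists modify the intersection pattern of $v$ with $a$; the positive genus of both $R_1$ and $R_2$, guaranteed by the first paragraph, provides exactly the topological flexibility needed for the construction to succeed.
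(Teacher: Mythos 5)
Your first paragraph is correct: $v \cup w$, oriented as $v - w$, is null-homologous, so it separates $S$ into two subsurfaces $R_1$, $R_2$, and since $v$ and $w$ are not isotopic each $R_i$ has positive genus.

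The construction in your second paragraph, however, rests on an impossible demand. You want a curve $a$ disjoint from $w$ with $T_a(v)$ isotopic to $w$, so that after choosing $b$ disjoint from $v$ the identity $T_a^{}T_b^{-1}(v) = T_a(v) = w$ falls out. But $T_a$ is supported in a neighborhood of $a$ that may be taken disjoint from $w$, so $T_a$ fixes $w$ up to isotopy; if in addition $T_a(v)$ is isotopic to $w$, then $T_a(v)$ and $T_a(w)$ are isotopic, and applying $T_a^{-1}$ forces $v$ isotopic to $w$, a contradiction. So this is not merely the engineering difficulty you flag in your final paragraph: no curve $a$ with the stated properties exists at all, and the plan of making one of the two twists act trivially on $v$ cannot work, because then the other twist would have to send $v$ to $w$ single-handedly while being disjoint from $w$. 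The resolution, and what the paper actually does, is to let both $a$ and $b$ cross both $v$ and $w$. Using the positive genus of $R_1$ and $R_2$, one puts the configuration in a standard form: view $S$ as a torus with handles attached, with $v$ and $w$ parallel longitudes of the torus and $a$, $b$ two meridians chosen so that each of the two pieces of $S$ cut off by $a \cup b$ contains a handle. Each of $a$ and $b$ meets each of $v$ and $w$ once; neither twist alone takes $v$ to $w$, but the composite $T_a^{}T_b^{-1}$ does, since the arcs inserted by $T_a$ and by $T_b^{-1}$ cancel in pairs and the net effect is to slide $v$ across a meridional annulus of the underlying torus onto $w$.
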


It is worth pointing out that this lemma together with the connectedness of $\C_x(S)$ immediately implies the non-obvious fact, known to Johnson \cite[page 253, line 6]{djconj}, that when $g\ge 3$, any two oriented curves in $S$ in the same homology class are equivalent under the action of the Torelli group.  This also holds when $g=2$, as we will see in the next section.

\begin{proof}

We can view $S$ as a torus with handles attached, with $v$ and $w$ as longitudes on the torus.  Then if we choose $a$ and $b$ as meridians on the torus as shown in the figure below, it is clear that $T_a^{}T_b^{-1}$ takes $v$ to $w$.  
\end{proof}

\begin{center}
\includegraphics[scale=1]{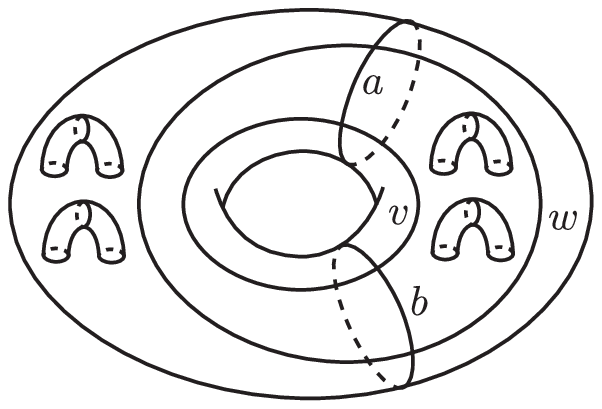}
\end{center}

To finish the inductive step it remains to show that for each vertex $v$ of $\C_x(S)$, the stabilizer of $v$ in $\I(S)$ is contained in the subgroup of $\I(S)$ generated by bounding pair maps and twists about separating curves.  If $v$ is represented by an oriented curve $a$ in the homology class $x$, the stabilizer of $v$ in $\I(S)$ is the subgroup $\I(S,a)$ represented by diffeomorphisms leaving $a$ invariant. 
More generally, let $\Mod(S,a)$ denote the stabilizer of $a$ in $\Mod(S)$.  There is a natural homomorphism $\eta \cln \Mod(S,a) \to \Mod(S',P)$ where $S'$ is the closed surface of genus $g-1$ obtained from $S$ by cutting along $a$ and collapsing the resulting two boundary curves to a pair $P=\{p,q\}$ of distinguished points, and $\Mod(S',P)$ is the mapping class group of $S'$ fixing each of these two points.  The kernel of $\eta$ is the infinite cyclic group generated by $T_a$ \cite[Proposition 3.20]{fm}.  Since $H_1(S',P)=H_1(S,a)=H_1(S)/\langle [a]\rangle$, and since $T_a^k \notin \I(S)$ for $k \neq 0$, it follows that $\eta$ restricts to an isomorphism:
\[ \eta : \I(S,a) \to \I(S',P).\]
where the latter group is the kernel of the natural homomorphism $\Mod(S',P)\to \Aut(H_1(S',P))$. 

If $T_d$ is a twist about a separating curve $d$ in $S'-P$, then $\eta^{-1}(T_d)$ is either a bounding pair map $T_d^{}T_a^{-1}$ or a twist about a separating curve $T_d$, depending on whether or not $d$ separates the two points of $P$ in $S'$.  If $T_c^{}T_d^{-1}$ is a bounding pair map in $\I(S',P)$, then $c \cup d$ does not separate the two points of $P$, otherwise $T_c^{}T_d^{-1}$ would not preserve the homology class of an arc that intersects $c$ once and is disjoint from $d$.  It follows that $\eta^{-1}(T_c^{}T_d^{-1})$ is a bounding pair map in $\I(S,a)$.  Thus in order to show that $\I(S,a)$ is generated by twists about separating curves and bounding pair maps it suffices to show that $\I(S',P)$ has this property.

We will prove this by considering two short exact sequences:
\[ 1 \to K_1 \to \I(S',p) \to \I(S') \to 1 \]
\[ 1 \to K_2 \to \I(S',P) \to \I(S',p) \to 1 \]
These are analogs for the Torelli group of Birman exact sequences for the full mapping class group; the first was considered by Johnson \cite[Lemma~3]{dj1} and the second by van den Berg \cite[Proposition~2.4.1]{vdB} and Putman \cite[Theorem~4.1]{ap}.

Consider first the first sequence.  Here $\I(S',p)$ is the subgroup of $\Mod(S',p)$ acting trivially on $H_1(S',p) = H_1(S')$.  The map $\I(S',p)\to\I(S')$ forgets the point $p$.  Elements of $\I(S')$ represented by separating twists and bounding pair maps clearly lift to such elements of $\I(S',p)$.  We claim the kernel $K_1$ is generated by bounding pair maps.  As in the proof of Lemma~\ref{pp}, the kernel of $\Mod(S',p)\to\Mod(S')$ consists of elements obtained by dragging $p$ around a loop in $S'$.  This is the image of the homomorphism $d\cln \pi_1(S',p) \to \pi_0\Diff^+(S',p)$ defined there.  Generators for this kernel are obtained by dragging $p$ around generators for $\pi_1(S',p)$, and we can choose embedded nonseparating curves for these generators, from the standard presentation of $\pi_1(S',p)$.  Such drag maps are bounding pair maps, so the kernel of $\Mod(S',p)\to\Mod(S')$ is in fact contained in $\I(S',p)$ and so coincides with $K_1$.  Thus $K_1$ is generated by bounding pair maps, as claimed.  It follows that $\I(S',p)$ is generated by separating twists and bounding pair maps if this is true for $\I(S')$.

Now we proceed to the second short exact sequence.  Separating twists and bounding pair maps in $\I(S',p)$ lift to such maps in $\I(S',P)$ by choosing the point $q$ to be sufficiently close to $p$.  Next we show that the kernel $K_2$ is generated by products of separating twists in $\I(S',P)$.  The kernel of $\Mod(S',P)\to\Mod(S,p)$ is formed by maps $d_\gamma = d(\gamma)$ resulting from dragging $q$ around loops $\gamma$ in $S'-p$, where now $d$ is the boundary map $\pi_1(S'-p,q)\to \pi_0\Diff^+(S',P)$.  To see how $d_\gamma$ acts on $H_1(S',P)$, consider the short exact sequence 
\[ 0\to H_1(S')\to H_1(S',P)\to \widetilde{H}_0(P)\to 0   \]
which is invariant under $\Mod(S',P)$.  Clearly $d_\gamma$ acts trivially on the image of $H_1(S')$ in $H_1(S',P)$ since $d_\gamma$ is trivial in $\Mod(S')$.  Also $d_\gamma$ acts trivially on $\widetilde{H}_0(P)$ since it fixes $P$.  The action of $d_\gamma$ on $H_1(S',P)$ will then be trivial if and only if it acts trivially on an element of $H_1(S',P)$ that maps to a generator of $\widetilde{H}_0(P)$. We can represent such an element by an arc $\epsilon$ joining $p$ to $q$.  It follows from the fact that the diffeomorphisms $d_\gamma$ act trivially on $H_1(S')$ and $\widetilde{H}_0(P)$ that there is a homomorphism $\varphi\cln\pi_1(S'-p,q)\to H_1(S')\subset H_1(S',P)$ such that $d_\gamma[\epsilon] = [\epsilon] + \varphi(\gamma)$ for all $\gamma$.  We claim that $\varphi(\gamma)=[\gamma]$, so that $\varphi$ is the abelianization map $\pi_1(S'-p,q)\to H_1(S'-p)=H_1(S')$.  Since $\varphi$ is a homomorphism it suffices to check that $d_\gamma[\epsilon]=[\epsilon]+[\gamma]$ for $\gamma$ ranging over a set of generators for $\pi_1(S'-p,q)$.  As generators we can choose embedded loops $\gamma$ disjoint from $\epsilon$, and for such loops $\gamma$ the formula $d_\gamma[\epsilon]=[\epsilon]+[\gamma]$ obviously holds.

From the formula $d_\gamma[\epsilon]=[\epsilon]+[\gamma]$ we see that $d_\gamma$ acts trivially on $H_1(S',P)$ if and only if $\gamma$ lies in the commutator subgroup of $\pi_1(S'-p)$.  It is a general fact that the commutator subgroup of a group is normally generated by commutators of generators of the group.  For $\pi_1(S'-p)$ we choose generators coming from representing $S'-p$ as a punctured $4(g-1)$-gon with opposite edges identified (this is not the standard identification!). These generators are nonseparating curves, any two of which intersect transversely in one point, so their commutator is represented by a curve $\gamma$ bounding a genus $1$ subsurface of $S'-p$.  The map $d_\gamma$ is then the composition of a twist along a parallel copy of $\gamma$ and an inverse twist along another parallel copy of $\gamma$.  This shows that $K_2$ is generated by products of separating twists.  All such separating twists lie in $\I(S',P)$ since for a twist along a separating curve $\gamma$ that does not separate $p$ and $q$ a basis for $H_1(S,P)$ can be chosen disjoint from $\gamma$, while if $\gamma$ does separate $p$ and $q$ then composing the twist along $\gamma$ with $d_\gamma$ or $d_\gamma^{-1}$ converts the twist along $\gamma$ to a twist along a $\gamma$ that does not separate $p$ and $q$, and $d_\gamma\in\I(S',P)$.

In summary, we have shown the inductive step:

\begin{prop}
If $\I(S')$ is generated by separating twists and bounding pair maps, then so are $\I(S',p)$, $\I(S',P)$, and $\I(S)$ in turn, in the last case assuming that the genus of $S$ is at least $3$.
\end{prop}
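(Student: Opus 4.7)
The plan is to prove the three implications in turn, each by analyzing the indicated short exact sequence and identifying generators of the kernel geometrically.

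For $\I(S') \Rightarrow \I(S',p)$, I would use the sequence $1 \to K_1 \to \I(S',p) \to \I(S') \to 1$. A separating twist or bounding pair map in $\I(S')$ lifts to one of the same type in $\I(S',p)$ simply by choosing the marked point $p$ off the supporting curve. The kernel $K_1$ coincides with the kernel of $\Mod(S',p)\to\Mod(S')$, which by the Birman exact sequence is the image of the point-pushing map $d\colon\pi_1(S',p)\to\Mod(S',p)$. Taking generators of $\pi_1(S',p)$ to be embedded nonseparating simple closed curves (as supplied by the standard presentation), the corresponding drag diffeomorphisms are bounding pair maps $T_a T_b^{-1}$ along the two boundary curves of an annular neighborhood of the loop, so $K_1$ is generated by bounding pair maps.

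For $\I(S',p) \Rightarrow \I(S',P)$, lifting of separating twists and bounding pair maps is again immediate, once $q$ is chosen sufficiently close to $p$. The real work is to describe $K_2$. Elements of $K_2$ are drag maps $d_\gamma$ for $\gamma\in\pi_1(S'-p,q)$ that act trivially on $H_1(S',P)$. Using the $\Mod(S',P)$-equivariant short exact sequence
\[ 0 \to H_1(S') \to H_1(S',P) \to \widetilde{H}_0(P) \to 0, \]
I would reduce to checking the action on an arc $\epsilon$ from $p$ to $q$, and compute $d_\gamma[\epsilon] = [\epsilon]+[\gamma]$ by verifying it directly on a generating set of embedded loops disjoint from $\epsilon$ and then invoking the fact that $\gamma\mapsto d_\gamma[\epsilon]-[\epsilon]$ is a homomorphism. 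Consequently $d_\gamma\in\I(S',P)$ iff $\gamma$ lies in the commutator subgroup of $\pi_1(S'-p,q)$. The general fact that this subgroup is normally generated by commutators of any chosen generating set, applied to a $4(g-1)$-gon presentation of $\pi_1(S'-p,q)$ whose generators are nonseparating loops intersecting pairwise in a single point, shows that each such commutator is realized by a simple closed curve bounding a genus-$1$ subsurface. Its drag map is then a product of two separating twists, so $K_2$ is generated by products of separating twists in $\I(S',P)$.

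For $\I(S',P) \Rightarrow \I(S)$ when $g\ge 3$, I would apply the group-action principle stated above Lemma~\ref{bp} to the $\I(S)$-action on $\C_x(S)$, which is connected by Theorem~\ref{connected}. Lemma~\ref{bp} supplies, for each edge of $\C_x(S)$, a bounding pair map carrying one endpoint to the other. Thus it suffices to show that each vertex stabilizer $\I(S,a)$ lies in the subgroup generated by separating twists and bounding pair maps. But $\I(S,a)$ is identified with $\I(S',P)$ via the cut-and-collapse isomorphism $\eta$ discussed preceding the proposition, and the analysis there shows that the $\eta$-preimage of a separating twist or bounding pair map in $\I(S',P)$ is itself a separating twist or bounding pair map in $\I(S)$. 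Combining the three stages yields the proposition.

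The main obstacle, and the only step beyond bookkeeping with exact sequences, is the Stage~2 analysis: establishing the formula $d_\gamma[\epsilon]=[\epsilon]+[\gamma]$ to pin down exactly which drag maps are Torelli, and then producing a presentation of $\pi_1(S'-p,q)$ whose generator-commutators are geometrically realizable as genus-$1$ bounding curves. Everything else amounts to straightforward lifting of twist-type generators through the Birman-style exact sequences.
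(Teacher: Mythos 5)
Your proposal is correct and follows essentially the same route as the paper: the same two Birman-type exact sequences with the same geometric identification of the kernels $K_1$ and $K_2$ (point-pushing around nonseparating generators for $K_1$, commutator analysis via the $4(g-1)$-gon presentation for $K_2$, pinned down by the formula $d_\gamma[\epsilon]=[\epsilon]+[\gamma]$), and the same reduction of the $\I(S',P)\Rightarrow\I(S)$ step to the $\I(S)$-action on $\C_x(S)$ via Lemma~\ref{bp} and the isomorphism $\eta$. The only difference is expository: you present the three implications in the order stated in the proposition, whereas the paper sets up the group-action reduction first and then runs the exact sequences.
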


The next two propositions, due to Johnson \cite{djhomeo}, justify two supplementary statements made earlier. 

\begin{prop}
\label{seps}
Every Dehn twist about a separating curve in a surface of genus at least $3$ is isotopic to a product of bounding pair maps.
\end{prop}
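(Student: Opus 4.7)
The plan is to apply the lantern relation to a carefully chosen four-holed sphere in $S$. Write $b_1 = c$. Since $g \ge 3$ and $c$ separates $S$ into pieces of positive genera $h$ and $g-h$, at least one side, call it $S_1$, has genus $h \ge 2$. Inside $S_1$ I would embed a four-holed sphere $\Sigma$ with $\partial \Sigma = b_1 \cup b_2 \cup b_3 \cup b_4$, chosen so that the complement $S_1 \setminus \Sigma$ (a surface of genus $h-2$ with three boundary components) is connected. A straightforward connectedness check then shows that $b_2, b_3, b_4$ and the three interior lantern curves $x_{12}, x_{13}, x_{23}$ are all non-separating in $S$.

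Next I would apply the lantern relation $T_{b_1} T_{b_2} T_{b_3} T_{b_4} = T_{x_{12}} T_{x_{13}} T_{x_{23}}$ and rearrange to
\[ T_c = T_{x_{12}} T_{x_{13}} T_{x_{23}} T_{b_4}^{-1} T_{b_3}^{-1} T_{b_2}^{-1}. \]
The homology relations $[b_1] + [b_2] + [b_3] + [b_4] = 0$ (the sum bounds $\Sigma$) and $[x_{ij}] = [b_i] + [b_j]$ (each $x_{ij}$ cobounds a pair of pants with $b_i$ and $b_j$) hold in $H_1(\Sigma)$. Pushing into $H_1(S)$, where $[b_1] = [c] = 0$, they yield $[x_{12}] = [b_2]$, $[x_{13}] = [b_3]$, and $[x_{23}] = [b_2] + [b_3] = -[b_4]$. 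Together with non-separation this makes each of $(x_{12}, b_2)$, $(x_{13}, b_3)$, and $(x_{23}, b_4)$ a bounding pair.

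The final step is the telescoping rewrite
\[ T_c = (T_{x_{12}} T_{b_2}^{-1}) \cdot T_{b_2}(T_{x_{13}} T_{b_3}^{-1}) T_{b_2}^{-1} \cdot (T_{b_2} T_{b_3}) (T_{x_{23}} T_{b_4}^{-1}) (T_{b_2} T_{b_3})^{-1}, \]
whose first factor is a bounding pair map and whose remaining two factors are conjugates of bounding pair maps. Since conjugation by any $f \in \Mod(S)$ sends $T_a T_b^{-1}$ to $T_{f(a)} T_{f(b)}^{-1}$, which is again a bounding pair map, this exhibits $T_c$ as a product of three bounding pair maps.

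The main obstacle is justifying the existence of the desired embedding of $\Sigma$ with all seven curves non-separating in $S$; this is exactly where the hypothesis $g \ge 3$ enters, as it guarantees a side of $c$ of genus at least two into which $\Sigma$ can be placed with connected complement. Everything else is a bookkeeping consequence of the lantern relation and the fact that $c$ is null-homologous.
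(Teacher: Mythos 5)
Your proof is correct and follows essentially the same route as the paper's: apply the lantern relation with the separating curve $c$ as one of the four boundary curves and pair each interior curve with one of the remaining boundary curves to get three bounding pair maps, using the null-homology of $c$ to verify that each pair is in fact a bounding pair. One small remark: the conjugation in your telescoping rewrite is a no-op, since each $T_{b_i}$ already commutes with every other twist appearing (the $b_i$'s are disjoint from all seven lantern curves), so you can simply regroup the commuting factors as $T_c = (T_{x_{12}}T_{b_2}^{-1})(T_{x_{13}}T_{b_3}^{-1})(T_{x_{23}}T_{b_4}^{-1})$, exactly as the paper does.
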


\begin{proof}

First consider the case that $S$ has genus $3$. A nontrivial separating curve $c$ in $S$ then splits $S$ into a punctured torus and a punctured genus $2$ surface, and the latter surface can be further decomposed as the union of a $4$-punctured sphere and a pair of pants, as in the figure below.

\vfill\eject

\begin{center}
\includegraphics[scale=1]{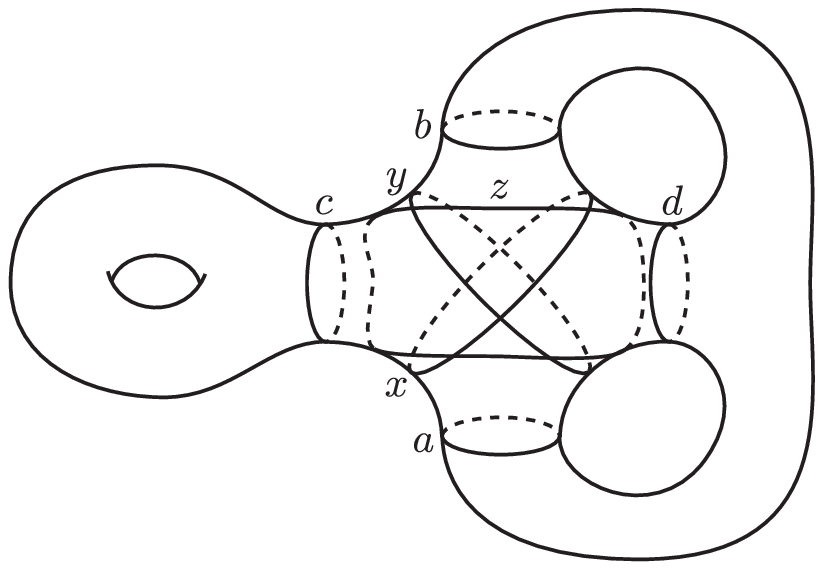}
\end{center}

The lantern relation \cite[p. 187]{dehn} \cite[\S IV]{djhomeo} \cite[\S 5.1]{fm}
gives
\[ T_xT_yT_z = T_aT_bT_cT_d \]
Since each of $T_a$, $T_b$, $T_c$, and $T_d$ commutes with all seven Dehn twists in the relation, we can rewrite this relation as follows:
\[ (T_x^{}T_a^{-1})(T_y^{}T_b^{-1})(T_z^{}T_d^{-1}) = T_c^{}. \]
In other words the Dehn twist about the separating curve $c$ is the product of three bounding pair maps.  This takes care of the genus $3$ case.  The general case is obtained from this by attaching the appropriate number of handles to the punctured torus and the pair of pants.
\end{proof}

\begin{prop}
\label{bps}
Every bounding pair map of a surface $S$ is isotopic to a product of bounding pair maps associated to pairs of curves bounding genus $1$ subsurfaces of $S$.
\end{prop}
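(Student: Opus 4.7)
The plan is to induct on the smaller genus of the two subsurfaces cobounded by the pair. Let $T_c^{} T_d^{-1}$ be a bounding pair map, so $c \cup d$ separates $S$ into pieces $S_1$ and $S_2$ of genera $h_1$ and $h_2$, each with boundary $c \cup d$. Non-triviality of the bounding pair forces $h_1, h_2 \ge 1$, for otherwise one side would be an annulus and $c$ would be isotopic to $d$. Taking $h_1 \le h_2$, I would induct on $h_1$, with the base case $h_1 = 1$ being immediate since $(c,d)$ already cobounds the genus $1$ subsurface $S_1$.

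For the inductive step, suppose $h_1 \ge 2$. Since $S_1$ has genus at least $2$ and $c \subset \partial S_1$, I can choose a separating simple closed curve $e$ in the interior of $S_1$ that splits $S_1$ into a genus $1$ subsurface $F$ bounded by $c \cup e$ and a complementary piece $S_1 \setminus F$ of genus $h_1 - 1$ bounded by $e \cup d$. With this choice in hand, the telescoping identity
$$ T_c^{} T_d^{-1} = (T_c^{} T_e^{-1})(T_e^{} T_d^{-1}) $$
expresses the original map as a product of two bounding pair maps: the first cobounds $F$ and is therefore already of the desired genus $1$ form, while the second has one side of genus $h_1 - 1 < h_1$ and thus falls under the inductive hypothesis. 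To verify that the two factors really are bounding pair maps on $S$, I would note that the homology equivalences $c \sim e$ and $e \sim d$ are witnessed by $F$ and $S_1 \setminus F$ respectively, and that $e$ is non-separating in $S$ because, after cutting along $e$, the piece $S_2$ still joins $F$ to $S_1 \setminus F$ through the common boundary circles $c$ and $d$.

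There is really no hard step here; the entire proof rests on the geometric move of peeling a genus $1$ handle off one side of the bounding pair, together with a single telescoping identity. The only point to keep an eye on is that non-triviality must be preserved at each stage, which is precisely what the hypothesis $h_1 \ge 2$ ensures when we split off $F$.
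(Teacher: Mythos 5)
Your argument is correct and matches the paper's approach: both proofs decompose the subsurface cobounded by the pair into a chain of genus $1$ pieces and observe that the resulting telescoping product $T_c^{}T_d^{-1}=(T_{c_0}^{}T_{c_1}^{-1})\cdots(T_{c_{n-1}}^{}T_{c_n}^{-1})$ expresses the map as genus $1$ bounding pair maps. You merely package this as an induction on the smaller genus, and your check that the intermediate curve $e$ is nonseparating in $S$ (while separating in $S_1$) is the right point to verify.
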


\begin{proof}

A bounding pair map is obtained by twisting a subsurface bounded by two curves through $360$ degrees.  If this subsurface has genus $n$, it can be decomposed into $n$ subsurfaces of genus $1$, each bounded by two curves, as indicated in the figure below, and the twist of the genus $n$ subsurface is isotopic to the composition of $n$ twists of the genus $1$ subsurfaces. 
\end{proof}

\vspace{4pt}
\begin{center}
\includegraphics[scale=1]{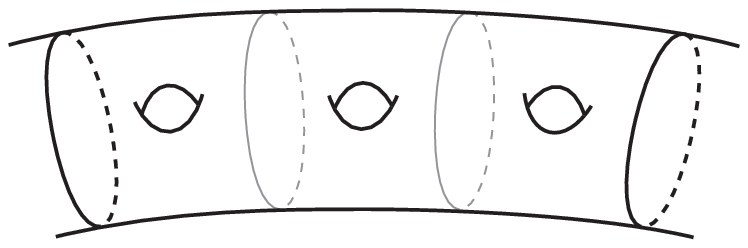}
\end{center}


\vfill\eject

\section{Starting the induction: genus 2}
\label{sec:2}

Now we restrict to the case that $S$ has genus $2$, where we want to show that $\I(S)$ is generated by separating twists.  The complex $\C_x(S)$ is $0$-dimensional in this case, so is of little help.  Instead we use $\B_x(S)$, which is $1$-dimensional and contractible, hence a tree.  From the elementary theory of groups acting on trees, it will suffice to show that the quotient $\B_x(S)/\I(S)$ is also a tree, and that the stabilizers of vertices in $\B_x(S)$ are generated by separating twists.  

It will be helpful to know exactly what an edge of $\B_x(S)$ looks like.  Such an edge corresponds to a multicurve separating $S$ into two components, so these components must be pairs of pants, with the multicurve consisting of three nonseparating curves $a,b,c$ as in the figure below.  

\begin{center}
\includegraphics[scale=1]{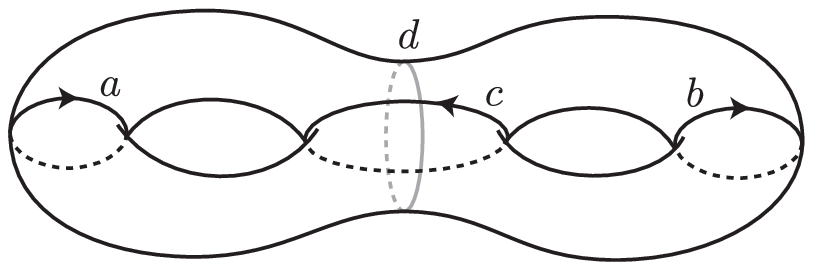}
\end{center}

\noindent
Since the multicurve is reduced, two of the three curves, say $a$ and $b$, will be oriented consistently with an orientation of either of the pairs of pants, and the third curve $c$ will be oppositely oriented.  As we move across the edge we transfer weights from $a$ and $b$ to $c$, or vice versa.  Transferring weights from $a$ and $b$ to $c$ decreases the value of $W$.  At the end of the edge with larger $W$-value the weighted multicurve is $pa+qb$, with $p \ge q$ say, and then at the other end the weighted multicurve is $(p-q)a + qc$, so we subtract $q$ from the weights on $a$ and $b$ and add $q$ to the weight on $c$.  The value of $W$ decreases from $p+q$ to $p$.  (Thus for a sequence of edges along which $W$ decreases, the pairs of weights are changing according to the Euclidean algorithm of repeatedly subtracting the smaller of two numbers from the larger.)

We claim that from a given vertex $pa+qb$ with $W>1$ (so both $p$ and $q$ are greater than $0$) all the edges of $\B_x(S)$ leading to vertices with smaller $W$-value are equivalent under the action of the stabilizer of the vertex in $\I(S)$.  This implies that each component of $\B_x(S)/\I(S)$ is a tree since there is a well-defined flow on it decreasing the values of $W$ monotonically until they reach the value $1$ at a vertex represented by a single curve. Since $\B_x(S)$ is connected, so is its quotient $\B_x(S)/\I(S)$, so the quotient must then be a tree, with a single vertex where $W=1$.  In particular, this shows that $\I(S)$ acts transitively on oriented curves in a given homology class, just as in higher genus as we noted in the remarks following Lemma~\ref{bp}.

To verify the claim, consider two edges of $\B_x(S)$ leading downward from this vertex.  These two edges correspond to two different choices for the $c$ curve, in the notation above.  Both choices lie in the complement of the $a$ and $b$ curves, a $4$-punctured sphere.  Isotopy classes of nontrivial curves in a $4$-punctured sphere are classified by their slope, an element of $\Q\cup\{\infty\}$.  Let us choose coordinates on the $4$-punctured sphere so that one choice of $c$ has slope $0/1$ and a separating curve $d$ as in the preceding figure has slope $1/0$.  The mapping class group of a $4$-punctured sphere fixing each of the punctures can be identified with the subgroup $G$ of $PSL(2,\Z)$ represented by matrices congruent to the identity mod~$2$.  The action of $G$ on slopes has three orbits, the slopes whose numerators and denominators are congruent to those of $0/1$, $1/0$, or $1/1$ mod $2$.  Topologically, these three classes are distinguished by how the corresponding curves separate the $4$ punctures into pairs.  In particular, the slopes congruent to $1/0$ correspond to separating curves on $S$, such as the curve $d$.  Whether the value of $W$ decreases or increases for a particular choice for $c$ depends only on how $c$ separates the $4$ punctures.  For $c$ of slope $0/1$ as in the figure the value of $W$ decreases, so the slopes congruent to $0/1$ correspond to the curves $c$ for which $W$ decreases.  These slopes form the vertices of a tree $T$ that can be visualized by superimposing it on the Farey diagram, as in the figure below.  

\begin{center}
\includegraphics[scale=1]{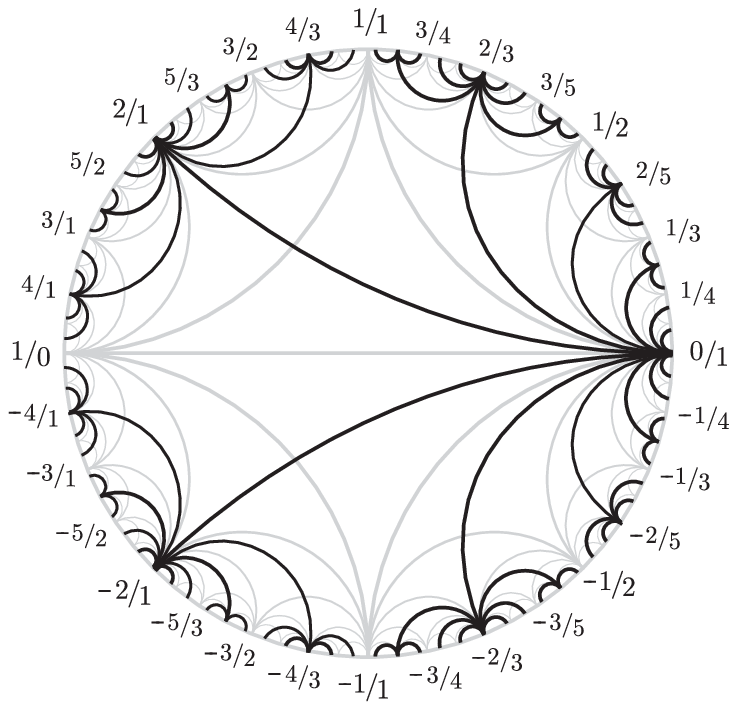}
\end{center}

Two vertices of $T$ are joined by a sequence of edges of $T$, and the curves corresponding to vertices along this path are related each to the next by Dehn twists along curves of slope congruent to $1/0$ mod $2$.  For example, a twist along a curve of slope $1/0$ takes slope $0/1$ to slope $2/1$.  Slopes congruent to $1/0$ mod $2$ give separating curves in $S$, so the path in $T$ gives a product of separating twists on $S$ taking the first choice of $c$ to the second choice, verifying the claim.

Next we check that the stabilizer of a vertex of $\B_x(S)$ corresponding to a vertex $pa+qb$ with $p,q>0$ is generated by separating twists.  We have just seen that products of separating twists in the stabilizer act transitively on vertices of $T$, so, modulo such products, we can assume the element of the stabilizer fixes the slope $0/1$ curve $c$.  But the stabilizer of the multicurve $a\cup b \cup c$ in $\I(S)$ is trivial, since an element in this stabilizer would have to be a product of twists along $a$, $b$, and $c$, and it is easy to see that such a product acts nontrivially on homology unless it is the trivial product.  (Consider the action on curves that intersect two of $a,b,c$ transversely in one point and are disjoint from the third.)  Thus the stabilizer of $pa+qb$ in $\I(S)$ is generated by separating twists when both $p$ and $q$ are positive.

There remains the stabilizer of a vertex corresponding to a single curve $a$.  This situation was analyzed in the previous section for arbitrary genus, where we showed that the stabilizer of a curve in genus $g$ is generated by twists and bounding pair maps if this is true for the full Torelli group in one lower genus.  In the present situation the Torelli group for genus $1$ is trivial, so the stabilizer of a curve in genus $2$ is generated by separating twists since there are no bounding pair maps until genus $3$.

This finishes the proof that $\I(S)$ is generated by separating twists in genus $2$, and hence also the proof of the Birman--Powell theorem.

\medskip\noindent
{\bf The structure of the Torelli group in genus 2.} It is not hard to extend the preceding arguments to see that $\I(S)$ is a nonfinitely generated free group in genus $2$.  For the action of $\I(S)$ on the tree $\B_x(S)$ the edge stabilizers are trivial as we observed above, and the quotient $\B_x(S)/\I(S)$ is a tree, so $\I(S)$ is a free product of vertex stabilizers, with one factor for each vertex of $\B_x(S)/\I(S)$.  For a vertex $pa+qb$ of $\B_x(S)$ with $p,q>0$ the stabilizer acts freely on the tree $T$ since we saw that no vertices can be fixed points, and no edge can be inverted since elements of the group $G$ cannot interchange slopes congruent to $1/0$ and $1/1$ mod $2$.  The stabilizer group acts transitively on vertices of $T$, so the stabilizer is the fundamental group of the orbit space of the action, an infinite wedge of circles since vertices of $T$ have infinite valence.  Thus the stabilizer of $pa+qb$ is a free group on an infinite number of generators. 

The other case is the stabilizer of a vertex that is a single curve.  From the discussion in the preceding section this is isomorphic to the kernel $K_2$ of the map $\I(S,P)\to\I(S,p)$.  This is a subgroup of the kernel of the map $\Mod(S,P)\to\Mod(S,p)$ which is $\pi_1(S-p)$ from the long exact sequence of homotopy groups for the fibration $\Diff^+(M,p)\to M-p$ obtained by evaluating diffeomorphisms at $q$, with fiber $\Diff^+(M,P)$.  The group $\pi_1(M-p)$ is free so the subgroup $K_2$ is free as well.  It is in fact the commutator subgroup, as our analysis showed, so it is nonfinitely generated.

Since $\B_x(S)/\I(S)$ is an infinite tree, we see that $\I(S)$ is a free product of infinitely many stabilizers, each of which is a nonfinitely generated free group, so $\I(S)$ is a nonfinitely generated free group itself.  Mess \cite{gm} gives the following more precise description of the infinite generating set.  Each nontrivial separating curve in $S$ induces a splitting of $H_1(S)$ into two symplectic subspaces, namely, the subspaces consisting of the elements represented by $1$-cycles supported entirely on one side of the separating curve or the other.  Mess proved that $\I(S)$ has a free generating set where there is one generator for each such symplectic splitting of $H_1(S)$ (this description can in fact be deduced by sharpening the argument given above, as in \cite[Section 7]{bbm}).  It is not true, however, that if we make an arbitrary choice of Dehn twist for each symplectic splitting, then we obtain a generating set.  As such, it is still an open problem to turn Mess's description of the generating set into an explicit generating set.

\medskip\noindent
{\bf Final remark.}  It is tempting to try to prove the Birman--Powell theorem in genus 2 using the same inductive step we used in higher genus.  Indeed, the genus one Torelli group is trivial, so by the Birman exact sequence it would suffice to show that $\C_x(S)$, or some variant, is connected in genus 2.

We have already mentioned that $\C_x(S)$ has no edges in genus 2.  In particular, the complex is not connected.  One might try to repair this by enlarging $C_x(S)$ to a complex with edges joining pairs of vertices corresponding to curves that are not disjoint but intersect in the minimum number of points, namely $4$, such as the curves $c$ and $T_d(c)$ where $c$ and $d$ are the curves shown in the figure at the beginning of this section.  This does not work, however.  The curves $c$ and $T_d(c)$ are joined by an edge path in $\B_x(S)$ of length $2$ with $a+b$ as the intermediate vertex, so the values of $W$ along this edge path lie between $1$ and $2$.  But there are pairs of vertices of $\C_x(S)$ for which the values of $W$ along the path in $\B_x(S)$ joining the two vertices must exceed any preassigned number $n$, since one can start with a vertex of $\B_x(S)$ where $W$ has a value larger than $n$ and then follow two different paths from this vertex along which $W$ decreases monotonically until one reaches a pair of vertices in $\C_x(S)$ with $W=1$.  Since $\B_x(S)$ is a tree, these two vertices cannot be joined by any other path along which $W$ has the maximum value $2$, so these two vertices cannot be in the same path component of the proposed enlargement of $\C_x(S)$.  This argument shows moreover that $\C_x(S)$ cannot be made connected by adding only a finite number of types of edges.


\bibliographystyle{plain}
\bibliography{igen}

\end{document}